\newtheorem{theorem}{Theorem}
\newtheorem{corollary}[theorem]{Corollary}
\newtheorem{lemma}[theorem]{Lemma}
\newtheorem{OP}{Open Problem}
\newtheorem{definition}{Definition}
\newtheorem{remark}[theorem]{Remark}
\newtheorem{problem}[theorem]{Problem}
\newtheorem{proposition}[theorem]{Proposition}
\newtheorem{notation}[theorem]{Notation}
\def\Q{\mathbb{Q}}
\newcommand{\QQ}{\mathbb{Q}}
\def\Z{\mathbb{Z}}
\newcommand{\ZZ}{\mathbb{Z}}
\def\N{\mathbb{N}}
\newcommand{\NN}{\mathbb{N}}
\def\cR{\mathcal{R}}
\newcommand{\vv}[1]{{\mathbf{#1}}}
\newenvironment{proof}{\noindent\textsc{{Proof}}.}{\hfill\raisebox{-1ex}{$\boxtimes$}}
\renewcommand{\tilde}{\widetilde}
\newcommand{\hidden}[1]{}
\newcommand{\ul}[1]{\underline{#1}}
\newcommand{\ftm}{f_{TM}}
\newcommand{\ftmm}{\tilde{f}_{TM}}
\newcommand{\cTM}{\tau_{TM}}
\newcommand{\tP}{\tilde{P}}
\newcommand{\tQ}{\tilde{Q}}
\title{Thue-Morse constant is not badly approximable}
\author{Dzmitry Badziahin\footnote{Durham University, Department of Mathematical Sciences, Science Laboratories, South Rd, Durham, DH1 3LE, United Kingdom} and Evgeniy Zorin\footnote{University of York, Department of Mathematics, York, YO10 5DD, United Kingdom}}
\begin{document}

\maketitle

\begin{abstract}
We prove that Thue-Morse constant $\cTM=0.01101001\dots_2$ is not a
badly approximable number. Moreover, we prove that $\cTM(a)=0.01101001\dots_a$ is not badly approximable for every integer base $a\geq 2$ such that $a$ is not divisible by 15. At the same time we provide a precise
formula for convergents of the Laurent series $\ftmm(z) =
z^{-1}\prod_{n=1}^\infty (1-z^{-2^n})$, thus developing further the research initiated by Alf van der Poorten and others.
\end{abstract}

\section{Introduction}
Let $\vv t=(t_0,t_1,\dots)=(0,1,1,0,1,0,0,\dots)$ be the Thue-Morse
sequence, that is the sequence $(t_n)_{n\in\N_0}$, where
$\N_0:=\N\cup\{0\}$, defined by recurrence relations $t_0=0$ and for
all $n\in\N_0$
$$
\begin{aligned}
t_{2n}&=t_n,\\
t_{2n+1}&=1-t_n.
\end{aligned}
$$
Thue-Morse sequence appears naturally in the description of many recurrent processes~\cite{AS2003}. It is often considered as the simplest non-trivial example of so called automatic sequences~\cite{AS2003}, i.e., sequences generated by finite automata (which are, in simple words, Turing Machines without memory tape).

Allouche and Shallit asked the following question (see~\cite{AS2003}, Open Problem~9, p.~403).
\begin{problem} \label{intro_main_pb}
Determine whether the partial quotients of the Thue-Morse constant
$\cTM$, defined by
\begin{equation} \label{defTM}
\cTM:=\sum_{k=0}^{\infty}\frac{t_k}{2^{k+1}},
\end{equation}
are bounded from above by a universal constant.
\end{problem}
\begin{remark}
The name of the constant~$\cTM$ defined by~\eqref{defTM} varies
slightly from one reference to another. In some sources it is called
Prouhet-Thue-Morse constant, and in some others it is referred to as
Thue-Morse-Mahler constant~\cite{BQ2013}. In this article we choose
the name Thue-Morse as the shortest commonly used name for it.
\end{remark}

Problem~\ref{intro_main_pb} can be easily translated into the language of
Diophantine approximations. It is a well known fact that the number
$x$ has bounded partial quotients if and only if it is badly
approximable, i.e., there is a constant $c>0$ such that for any
$p/q\in\Q$ we have
\begin{equation} \label{intro_pb_diophantine}
\left|x-\frac{p}{q}\right|>\frac{c}{q^2}
\end{equation}
(for instance, see~\cite{C1957}, Chapter~1, \S2). So
Problem~\ref{intro_main_pb} is equivalent to the question whether
the Thue-Morse constant $x = \cTM$ is badly approximable or in other
words whether it satisfies \eqref{intro_pb_diophantine}.

This problem attracted interest in the last years. In particular,
Bugeaud~\cite{B2011} showed that the transcendence exponent of
$\cTM$ is 2, that is for any $\varepsilon>0$ there exists a constant
$c_{\varepsilon}$ such that
\begin{equation} \label{intro_BR}
\left|\cTM-\frac{p}{q}\right|>\frac{c_{\varepsilon}}{q^{2+\varepsilon}}.
\end{equation}
Later Bugeaud and Queff\'elec~\cite{BQ2013} proved that the sequence
of partial quotients of $\cTM$ contains infinitely many values equal
to 4 or 5 and at the same time it contains infinitely many values
bigger than $50$. Note that in terms of the
inequality~\eqref{intro_pb_diophantine} this result implies an
absolute upper bound on the constant $c$: if such positive $c$
exists then $c<1/50$.

In this paper we solve Problem~\ref{intro_main_pb} by showing that
$\cTM$ is not a badly approximable number. This result is proved in
Theorem~\ref{theo_main_one} below.
To establish it, we provide a sequence
$\left(p_n/q_n\right)_{n\in\N}\in\QQ$ with $\lim_{n\to \infty}q_n\to
\infty$, of good rational approximations to the Thue-Morse constant.
These approximations satisfy
\begin{equation} \label{intro_good_approximations}
q_n\left|q_n\cTM-p_n\right|\rightarrow 0, \quad \text{ as } n\rightarrow\infty.
\end{equation}
It straightforwardly implies that the
inequality~\eqref{intro_pb_diophantine} is not satisfied for any
positive constant~$c$ and approximations $p_n/q_n$ where $n$ is
large enough.


We construct a sequence of approximations $\left(p_n/q_n\right)_{n\in\N}$ by a specialization of good \emph{functional} approximations to a so called Thue-Morse generating function:
\begin{equation} \label{ftm_presentation}
\ftm(z):=\sum_{i=0}^{\infty}(-1)^{t_i}z^i.
\end{equation}
It is easy to see that the Thue-Morse constant can be represented as
$$
\cTM=\frac12\left(1-\frac12\ftm(1/2)\right). 
$$
In our article we focus on a slightly modified version of this
function:
$$
\ftmm(z):=\frac{1}{z}\ftm(1/z).
$$
It is a Laurent series and one can easily check that $\cTM$ is a
badly approximable number if and only if $\ftmm(2)$ is badly
approximable too.

Note that the value $\ftmm(2)$ also appears in the work of
Dubickas~\cite{dub2006}. It is shown there that for every irrational
$x$ one has
$$
\inf_{n\in\NN}||2^n x||\le \ftmm(2)
$$
where $||\cdot||$ denotes the distance to the nearest integer.
Moreover for $x = \ftmm(2)$ the inequality becomes an equality.

To study functional approximations to $\ftmm(z)$ we apply the theory
of continued fractions for Laurent series, which is analogous
to the classical theory of continued fractions of rational numbers
(for instance, see~\cite{vdP}).

In our proof we take advantage of a functional equation for the function $\ftmm$ (see~\eqref{func_eq_2} below). This functional equation allows us, given one functional convergent to $\ftmm(z)$, to produce an
infinite sequence $p_n/q_n$ of rational approximations to $\ftmm(2)$
all satisfying
\begin{equation} \label{intro_sufficiently_good_approximation}
\left|\ftmm(2)-\frac{p_n}{q_n}\right|\leq\frac{C}{q_n^2},
\end{equation}
where the constant $C$ depends only on the initial functional approximation.

We find this construction interesting even on its own, as not only it allows
to reproduce the results from~\cite{BQ2013} by choosing a good
initial functional convergent to $\ftm$, but also it explains regularly
situated large partial quotients in the continued fraction of $\cTM$
of the same value which can be observed numerically. For instance,
in this way one can find an infinite sequence of partial quotients
equal to 2569. With some computational efforts one can check that it
is generated by the 15th convergent $P(z)/Q(z)$ of $\ftmm(z)$.

Next, we manage to use arguments on congruences and primitive roots
modulo $3^k$, $k\in\N$, to justify that in a carefully chosen
sequence of $p_n/q_n$
satisfying~\eqref{intro_sufficiently_good_approximation} there will
be arbitrarily large common factors $r_n$, hence after reducing by
this common factor the couple of integers $(p_n/r_n,q_n/r_n)$
verifies~\eqref{intro_good_approximations} and so $\ftmm(2)$ is not
badly approximable.

In this paper we also provide the precise formulae for computing the
convergents of Laurent power series $\ftmm(z)$. Our interest in this
subject is inspired by several papers by van der Poorten and others,
where they study continued fractions for functions given by infinite
products~\cite{AMvdP,vdP_first,vdP}. For instance, they numerically verified
that the first partial quotients of $\ftm(1/z)$ have degree at most
two. At the same time, the partial quotients of degree one have quickly growing coefficients, which is the generic behavior
(see~\cite{vdP}, section~2.1). The authors of~\cite{AMvdP} proved that all the partial quotients of Laurent power
series
$$
\sum_{k=0}^{\infty}(-1)^{t_k}z^{-3^k}
$$
have degree one.

It appears that the continued fraction for $\ftmm(z)$ is especially nice looking. Not only all its partial quotiens have degree one, but, moreover, all the even ones are rational multiples of $z-1$ and all the odd ones are rational multiples of $z+1$ (see Proposition~\ref{prop_part_q}). We even provide simple recurrent formulae allowing to calculate rational factors of $z-1$ and $z+1$, thus giving the exact values of all the partial quotients of $\ftmm$, see Proposition~\ref{proposition_coefficients_CF}. 


\paragraph{Generalizations.} The natural question is whether it is possible to generalize our considerations to a broader framework. The launching site for our constructions in this article is the functional equation~\eqref{func_eq_2}, so a natural extension is the following question.
\begin{OP} \label{OP1}
Let $d\in\N$, $d\geq 2$ and let the function $f(z)\in\QQ[[z]]$
satisfies the functional equation
\begin{equation} \label{fe_od}
f(z^d)=a(z)f(z)+b(z),
\end{equation}
where $a(z),b(z)\in\Q(z)$. Moreover assume that $f(z)$ is a transcendental function. Let $a\in\Q$ be a non-zero rational within the radius of convergence of $f$. Determine the conditions for $f(a)$ to be a badly approximable number.
\end{OP}
Our arguments in this article can be used to show that $\ftmm(a)$ is not badly approximable for all $a\in\N$, $n\geq 2$, with possible exceptions when $a$ is divisible by 15. This follows from Theorem~\ref{th_tma} (see Corollary~\ref{cor_tma}).

Note however that Van der Poorten and Shallit showed in~\cite{vdP_S} that the function $f_M(z)=\sum_{k=0}^{\infty}z^{2^k}$,  which satisfies the functional equation
$$
f_M(z^2)=f_M(z)-z,
$$
has a badly approximable value at $z=1/2$, moreover the continued fraction of $f_M(1/2)$ consists of just partial quotients 1 and 2 (actually in~\cite{vdP_S} this result is proved even for a much more general case of series $2\sum_{k=0}^{\infty}\pm 2^{-2^k}$).
So, the answer to Open Problem~\ref{OP1} definitively requires some additional conditions on the functional equation~\eqref{fe_od}, separating the case of badly approximable values from not badly approximable ones.

While  Open Problem~\ref{OP1} itself seems already enigmatic, we can consider even broader framework. The equation~~\eqref{fe_od} is a classical example of so called Mahler's functional equation. In the most general framework, the following system of functional equations is known as \emph{Mahler's system}:
\begin{equation} \label{systeme_1}
    a(\b{z})\ul{f}(z^d)=A(z)\ul{f}(z)+B(z),
\end{equation}
where $d\geq 2$ is an integer, $\ul{f}(z)=(f_1(z),\dots,f_n(z))\in\Q[[z]]^n$, $a(z)\in\Q[z]$, $A$ (resp. $B$) is an $n\times n$ (resp. $n\times 1$) matrix with coefficients in $\Q[z]$.
\begin{OP}
Let $d\in\N$, $d\geq 2$ and let the system of functions
$\ul{f}(z)=(f_1(z),\dots,f_n(z))\in(\QQ[[z]])^n$ be a solution to
the system~\eqref{systeme_1}. Moreover assume that $f_1(z)$ is a
transcendental function. For a non-zero rational $a\in\Q$ within the
radius of convergence of $\ul{f}(z)$ decide whether $f_1(a)$ is a
badly approximable number or not.
\end{OP}

\section{General facts}
It is well known (\cite{AS2003},  \S13.4) that the function $\ftm(z)$, defined by~\eqref{ftm_presentation}, admits the following presentation:
$$
\ftm(z)=\prod_{k=0}^{\infty}\left(1-z^{2^k}\right),
$$
and the following functional equation holds:
\begin{equation} \label{func_eq}
\ftm(z^2)=\frac{\ftm(z)}{1-z}.
\end{equation}

As we have mentioned in the introduction, we will focus on the study of a slightly modified version of $\ftm$:
$$
\ftmm(z):=\frac{1}{z}\ftm(1/z).
$$
Substituting $1/z$ in place of $z$ into~\eqref{func_eq} we find that $\ftmm$ satisfies the
following functional equation:
\begin{equation} \label{func_eq_2}
\ftmm(z^2)=\frac{\ftmm(z)}{z-1}.
\end{equation}

Easy verification shows that $\ftm(z)$ and $\ftmm(z)$ are closely
linked with Thue-Morse constant by
\begin{equation} \label{TM_link}
\cTM=\frac12\left(1-\frac12\ftm(1/2)\right)=\frac12\left(1-\ftmm(2)\right).
\end{equation}

By rewriting~\eqref{ftm_presentation} for $\ftmm(z)$ one can easily
check that $\ftmm(z)\in \QQ[[z^{-1}]]$. Moreover, all the
coefficients of the resulting Laurent series are either 1 or $-1$,
therefore it converges for $|z|>1$. Another consequence is that
$\ftmm(z)$ has the following continued fraction expansion 
$$
\ftmm(z) =[0;a_1(z), a_2(z),\ldots]=\dfrac{1}{a_1(z)+\dfrac{1}{a_2(z)+\dots}},
$$
where $a_i(z)\in\Q[z]$, $i\in\N$  (the details can be found in~\cite{vdP}).

The important consequences of this fact are that the
convergents $P_n(z) / Q_n(z)$ can be computed by the following
recurrent formulae
\begin{equation}\label{eq_pqn}
\begin{array}{rcl}
P_{n+1}(z) &=& a_{n+1}(z) P_n(z) + P_{n-1}(z),\\[1ex]
Q_{n+1}(z) &=& a_{n+1}(z) Q_n(z) + Q_{n-1}(z),
\end{array}
\end{equation}
for $n\ge 1$. Moreover Proposition 1 from~\cite{vdP} implies the following.
\begin{proposition} \label{proposition_convergent_iff}
Let $P(z),Q(z)\in\Q[z]$ be two polynomials. Then  $P(z) / Q(z)$ is a convergent to $\ftmm(z)$ if and only if
\begin{equation}\label{eq_convergent}
\deg(Q(z)\ftmm(z) - P(z)) < -\deg Q(z),
\end{equation}
where the degree of Laurent series
$G(z)=\sum_{k=h}^{\infty}a_kz^{-k}$, $a_h\ne 0$ is minus the smallest
index of a non-zero coefficient, that is in our notation we have
$\deg G=-h$.
\end{proposition}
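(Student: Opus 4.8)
The plan is to work inside the field of Laurent series $\Q((z^{-1}))$ equipped with the degree function $\deg$, which behaves as a non-archimedean valuation: $\deg(FG)=\deg F+\deg G$ and $\deg(F+G)\le\max(\deg F,\deg G)$, with equality whenever $\deg F\ne\deg G$. First I would record the standard facts about the convergents $P_n/Q_n$ generated by the recurrence~\eqref{eq_pqn} (these are exactly what Proposition~1 of~\cite{vdP} packages). Writing $\alpha_{n+1}=[a_{n+1};a_{n+2},\dots]$ for the complete quotient, so that $\ftmm=(P_n\alpha_{n+1}+P_{n-1})/(Q_n\alpha_{n+1}+Q_{n-1})$, the key items are: the determinant identity $P_nQ_{n-1}-P_{n-1}Q_n=c_n$ for a nonzero constant $c_n$ (whence $\gcd(P_n,Q_n)=1$ and the matrix with rows $(Q_n,P_n),(Q_{n-1},P_{n-1})$ is invertible over $\Q[z]$); the strict growth $\deg Q_{n+1}=\deg Q_n+\deg a_{n+1}$ with $\deg a_{n+1}\ge1$; and the error identity
\[
\delta_n:=Q_n\ftmm-P_n=\frac{c_n}{Q_n\alpha_{n+1}+Q_{n-1}},\qquad \deg\alpha_{n+1}=\deg a_{n+1},
\]
from which, since $\deg(Q_n\alpha_{n+1})=\deg Q_n+\deg a_{n+1}>\deg Q_{n-1}$, one reads off $\deg\delta_n=-\deg Q_{n+1}$.

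The easy direction is then immediate. If $P/Q$ is a convergent, I may normalise it so that $P=cP_n,\ Q=cQ_n$ for a constant $c\ne0$; constants do not affect degrees, so $\deg(Q\ftmm-P)=\deg\delta_n=-\deg Q_{n+1}=-\deg Q_n-\deg a_{n+1}<-\deg Q_n=-\deg Q$, which is precisely~\eqref{eq_convergent}.

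For the converse I would assume~\eqref{eq_convergent} and reduce to $\gcd(P,Q)=1$: if $g=\gcd(P,Q)$ then factoring it out shows the reduced pair satisfies the same (in fact stronger) inequality, so nothing is lost. Choose the unique $n$ with $\deg Q_n\le\deg Q<\deg Q_{n+1}$, and use the invertibility above to write $(Q,P)=u\,(Q_n,P_n)+v\,(Q_{n-1},P_{n-1})$ with $u,v\in\Q[z]$, so that $Q\ftmm-P=u\delta_n+v\delta_{n-1}$. Suppose, for contradiction, $v\ne0$. Combining the hypothesis with $\deg Q\ge\deg Q_n$ gives $\deg(Q\ftmm-P)<-\deg Q\le-\deg Q_n=\deg\delta_{n-1}\le\deg(v\delta_{n-1})$; rewriting $v\delta_{n-1}=(Q\ftmm-P)-u\delta_n$ and applying the ultrametric inequality then forces the maximum onto the $u\delta_n$ term, i.e. $\deg(v\delta_{n-1})\le\deg(u\delta_n)$. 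Since $\deg\delta_{n-1}=-\deg Q_n$ and $\deg\delta_n=-\deg Q_{n+1}$, this reads $\deg u\ge\deg v+\deg a_{n+1}>\deg v$. But then the leading term of $Q=uQ_n+vQ_{n-1}$ comes from $uQ_n$ (as $\deg u>\deg v$ and $\deg Q_n>\deg Q_{n-1}$), giving $\deg Q=\deg u+\deg Q_n\ge\deg Q_{n+1}$, contradicting the choice of $n$. Hence $v=0$, so $Q=uQ_n,\ P=uP_n$ and $P/Q=P_n/Q_n$ is a convergent.

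The main obstacle is this converse: making the non-archimedean degree bookkeeping airtight, in particular establishing $\deg\delta_n=-\deg Q_{n+1}$ through the complete quotient, and being careful about normalisation, since~\eqref{eq_convergent} is invariant only under rescaling by constants (which is why one must pass to $\gcd(P,Q)=1$ before invoking uniqueness of the representation $(Q,P)=u(Q_n,P_n)+v(Q_{n-1},P_{n-1})$). Everything else is routine. Since these identities are exactly the content of Proposition~1 of~\cite{vdP}, the most economical writeup simply cites it; the argument above is the self-contained reconstruction.
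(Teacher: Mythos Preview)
Your argument is correct. The paper itself does not give a proof of this proposition at all: it simply states that the result is a consequence of Proposition~1 in van der Poorten's survey~\cite{vdP}. What you have written is precisely the standard self-contained reconstruction of that fact in the non-archimedean setting of $\Q((z^{-1}))$, and your own final paragraph already acknowledges this. The degree bookkeeping in your converse direction is sound (the one place worth a half-line of extra care is the case $u=0$, which you implicitly handle since then $Q\ftmm-P=v\delta_{n-1}$ contradicts the hypothesis immediately; and once $v=0$ the conclusion $P/Q=P_n/Q_n$ follows regardless of whether you have first reduced to $\gcd(P,Q)=1$). So there is no divergence of approach to discuss: the paper defers to the literature, and you have unpacked exactly what is being deferred to.
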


Note that unlike the classical setup of rational numbers, where the numerators and
denominators $p_n$ and $q_n$ of convergents are defined uniquely,
$P_n(z)$ and $Q_n(z)$ are only unique up to multiplication by a
non-zero constant. At the same time, the polynomials computed by
formulae~\eqref{eq_pqn} in general are not monic. So, we can add a condition that the numerator $P_n$ has to be monic, producing a unique representative for each functional convergent to $\ftmm$.

These canonical representatives $\hat{P}_n(z)/\hat{Q}_n(z)$, with $\hat{P}_n$ a monic polynomial,  are still linked by recurrent relations similar to~\eqref{eq_pqn}:
\begin{equation}\label{eq_pqn2}
\begin{array}{rcl}
\hat{P}_{n+1}(z) &=& \hat{a}_{n+1}(z) \hat{P}_n(z) + \beta_{n+1}\cdot \hat{P}_{n-1}(z);\\[1ex]
\hat{Q}_{n+1}(z) &=& \hat{a}_{n+1}(z) \hat{Q}_n(z) +
\beta_{n+1}\cdot \hat{Q}_{n-1}(z)
\end{array}
\end{equation}
where we define, with $ \rho_n$ denoting the leading coefficient of $P_n$,
$$
\hat{a}_{n+1}(z) = \frac{a_{n+1}(z)\cdot \rho_n}{\rho_{n+1}}\;\mbox{ and } \;
\beta_{n+1} = \frac{\rho_{n-1}}{\rho_{n+1}}.
$$
One can easily check from~\eqref{eq_pqn2} that $\hat{a}_n(z)$ are
always monic, while the denominators $\hat{Q}_n$ may be not monic in a general case. However we will see in the next section that for the function $\ftmm$ both numerators and denominators of the canonical representatives $\hat{P}_n(z)/\hat{Q}_n(z)$ of convergents are monic polynomials.

\section{Continued fraction of the function $\ftmm$}


\begin{lemma}\label{lem_pqz2}
Let $P(z) / Q(z)$ be a convergent to $\ftmm(z)$. Then $P^*(z) /
Q^*(z)$ is also a convergent, where
\begin{equation}\label{eq_pqz2}
P^*(z) = (z-1)P(z^2);\quad Q^*(z) = Q(z^2).
\end{equation}
\end{lemma}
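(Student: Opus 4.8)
The plan is to use the characterization of convergents given in Proposition~\ref{proposition_convergent_iff}, which reduces the claim to a single degree inequality. Since $P(z)/Q(z)$ is a convergent to $\ftmm(z)$, we know
\begin{equation}\label{eq_plan_hyp}
\deg\bigl(Q(z)\ftmm(z) - P(z)\bigr) < -\deg Q(z).
\end{equation}
We must show that the candidate pair $P^*(z) = (z-1)P(z^2)$, $Q^*(z) = Q(z^2)$ satisfies the analogous inequality, namely $\deg\bigl(Q^*(z)\ftmm(z) - P^*(z)\bigr) < -\deg Q^*(z)$. First I would compute the left-hand error term directly, exploiting the functional equation~\eqref{func_eq_2} in the form $\ftmm(z) = (z-1)\ftmm(z^2)$ to replace $\ftmm(z)$ wherever it is multiplied against the $z^2$-substituted polynomials.

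The key computation is
$$
Q^*(z)\ftmm(z) - P^*(z) = Q(z^2)\cdot(z-1)\ftmm(z^2) - (z-1)P(z^2) = (z-1)\bigl(Q(z^2)\ftmm(z^2) - P(z^2)\bigr).
$$
Thus the new error factors as $(z-1)$ times the $z\mapsto z^2$ substitution of the old error $E(z) := Q(z)\ftmm(z) - P(z)$. The remaining work is purely a bookkeeping exercise on degrees of Laurent series. I would establish two elementary facts about the degree function defined in Proposition~\ref{proposition_convergent_iff}: that substituting $z\mapsto z^2$ doubles the degree, i.e. $\deg G(z^2) = 2\deg G(z)$, and that multiplication by $(z-1)$ raises the degree by exactly one, i.e. $\deg\bigl((z-1)H(z)\bigr) = \deg H(z) + 1$ (the leading term $z$ dominates the constant term $-1$, so no cancellation of the top coefficient occurs). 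Both follow immediately from the convention that the degree is minus the smallest index of a nonzero coefficient.

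Combining these, $\deg\bigl(Q^*(z)\ftmm(z) - P^*(z)\bigr) = \deg\bigl((z-1)E(z^2)\bigr) = 2\deg E(z) + 1$, while $-\deg Q^*(z) = -2\deg Q(z)$. The target inequality $2\deg E(z) + 1 < -2\deg Q(z)$ is therefore equivalent to $\deg E(z) < -\deg Q(z) - \tfrac12$, and since all degrees here are integers, this is in turn equivalent to $\deg E(z) \le -\deg Q(z) - 1$, which is exactly the strict integer inequality $\deg E(z) < -\deg Q(z)$ supplied by the hypothesis~\eqref{eq_plan_hyp}. Hence $P^*/Q^*$ is a convergent by the converse direction of Proposition~\ref{proposition_convergent_iff}.

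I do not anticipate a serious obstacle: the argument is essentially a one-line factorization followed by degree tracking. The only point requiring a moment of care is verifying that no unexpected leading-term cancellation occurs when multiplying by $(z-1)$ and when substituting $z\mapsto z^2$; once the degree conventions are pinned down this is automatic. A secondary subtlety worth a remark is the parity gain: the factor of $+1$ coming from the $(z-1)$ multiplication is precisely what is absorbed by the integrality of the degrees, so the construction is tight and does not waste any slack in the inequality.
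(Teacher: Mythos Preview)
Your argument is correct and is essentially identical to the paper's own proof: both use the functional equation to factor $Q^*(z)\ftmm(z)-P^*(z)=(z-1)\bigl(Q(z^2)\ftmm(z^2)-P(z^2)\bigr)$, then compare degrees and invoke Proposition~\ref{proposition_convergent_iff}. The only difference is cosmetic---you spell out the degree bookkeeping and the integrality step in more detail, whereas the paper compresses it to the single inequality $\deg\bigl((z-1)(Q(z^2)\ftmm(z^2)-P(z^2))\bigr)\le -2\deg Q-1$.
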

\begin{proof} It is a consequence of the functional relation for
$\ftmm(z)$. Indeed,
$$
\deg (Q^*(z)\ftmm(z) - P^*(z)) = \deg((z-1)(Q(z^2)\ftmm(z^2) -
P(z^2)))\le -2\deg(Q)-1.
$$
At the same time, $\deg Q^*(z)=2\deg Q(z)$. Therefore $Q^*(z)$ and $P^*(z)$ satisfy the condition~\eqref{eq_convergent}. We conclude that $P^*(z) /
Q^*(z)$ is a convergent to $\ftmm$ by Proposition~\ref{proposition_convergent_iff}.
\end{proof}

Recursive application of Lemma~\ref{lem_pqz2} enables us to
construct an infinite sequence of convergents to $\ftmm(z)$ starting
from only one convergent. However not every convergent can
be constructed in this way. For example, by a direct computation one
can find the convergents
\begin{equation} \label{first_convergents}
\frac{1}{1+z}\quad \mbox{and}\quad \frac{z^2 - 2}{z^3+z^2}.
\end{equation}
Then \eqref{eq_pqz2} immediately gives us the convergents
\begin{equation} \label{second_convergents}
\frac{z-1}{z^2+1}, \quad \frac{(z-1)(z^2-1)}{z^4+1},\;\mbox{and}\;
\frac{(z-1)(z^4-2)}{z^6+z^4}.
\end{equation}
However these calculations, using  Lemma~\ref{lem_pqz2} and the
initial convergents~\eqref{first_convergents} only, neither allow
one to construct the convergent $P(z)/Q(z)$ to $\ftmm(z)$ with
$\deg(Q) = 5$, nor give an information whether such a convergent
exists. The next proposition shows that, in fact, for every $n\in\N$
there is a convergent $P_n(z)/Q_n(z)$ to $\ftmm(z)$ such that
$\deg(Q_n) = n$.
\begin{proposition}\label{prop_part_q}
Let $[0;a_1(z), a_2(z),\ldots,]$ be the continued fraction expansion of
$\ftmm(z)$. Then $\forall n\ge 2$,
$a_{2n-1}(z)=\alpha_{2n-1}\cdot(z+1)$ and
$a_{2n}(z)=\alpha_{2n}\cdot (z-1)$ where $\alpha_i\in\QQ$, $i\in\N$, are
constants. Moreover for every $n\in\NN$, the convergent $Q_{2n}(z)$ is of the form
$Q_{2n}(z) = Q_n(z^2)$, in particular it is an even function; $Q_{2n-1}(z)$ is of the
form $Q_{2n-1}(z) = (z+1)\cdot Q^+_{n-1}(z^2)$, where $Q^+_{n-1}(X)$ is a polynomial of degree $n-1$ with rational coefficients.
\end{proposition}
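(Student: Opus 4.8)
The plan is to build the monic convergents $\hat P_m/\hat Q_m$ by strong induction on the denominator degree $m$, producing the even-degree ones from the doubling Lemma~\ref{lem_pqz2} and the odd-degree ones by an explicit difference construction, verifying each through the degree criterion of Proposition~\ref{proposition_convergent_iff}. The base cases $m=1,2,3,4$ are read off from \eqref{first_convergents}--\eqref{second_convergents}: there $\hat Q_1=z+1$, $\hat Q_2=z^2+1=\hat Q_1(z^2)$, $\hat Q_3=z^3+z^2$ and $\hat Q_4=z^4+1=\hat Q_2(z^2)$ already display the two asserted shapes. For the even step, Lemma~\ref{lem_pqz2} sends the degree-$n$ convergent to the convergent $\bigl((z-1)\hat P_n(z^2),\,\hat Q_n(z^2)\bigr)$ of degree $2n$; its denominator $\hat Q_n(z^2)$ is monic and, crucially, an even function of $z$, while its numerator carries an explicit factor $z-1$. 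This already yields $\hat Q_{2n}(z)=\hat Q_n(z^2)$, and the factor $z-1$ in $\hat P_{2n}$ is exactly what makes the odd step go through.

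The hard part is to produce a convergent of every odd degree $2n-1$, equivalently to rule out partial quotients of degree $\ge 2$; note that knowing all even degrees occur does not by itself force the odd ones, since a strictly increasing degree sequence could still skip them. Writing $\hat Q_{2n},\hat P_{2n}$ (resp. $\hat Q_{2n-2},\hat P_{2n-2}$) for the doublings of the degree-$n$ (resp. degree-$(n-1)$) convergents, I would set
$$
\tilde Q_{2n-1}:=\frac{\hat Q_n(z^2)-\beta\,\hat Q_{n-1}(z^2)}{z-1},\qquad
\tilde P_{2n-1}:=\hat P_n(z^2)-\beta\,\hat P_{n-1}(z^2),
$$
with $\beta:=\hat Q_n(1)/\hat Q_{n-1}(1)$ chosen so that the numerator of $\tilde Q_{2n-1}$ vanishes at $z=1$. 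Since $\hat Q_n(z^2)-\beta\hat Q_{n-1}(z^2)$ is an even function, a zero at $z=1$ forces a zero at $z=-1$, so it is divisible by $z^2-1$; cancelling the single factor $z-1$ leaves $\tilde Q_{2n-1}=(z+1)\,Q^+_{n-1}(z^2)$ with $\deg Q^+_{n-1}=n-1$, which is precisely the shape claimed for $Q_{2n-1}$. The companion $\tilde P_{2n-1}$ is automatically a polynomial, because the factors $z-1$ present in $\hat P_{2n}$ and $\hat P_{2n-2}$ cancel the denominator directly.

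To see that $\tilde P_{2n-1}/\tilde Q_{2n-1}$ is a genuine convergent I would apply Proposition~\ref{proposition_convergent_iff}: since
$$
\tilde Q_{2n-1}\,\ftmm-\tilde P_{2n-1}
=\frac{\bigl(\hat Q_{2n}\ftmm-\hat P_{2n}\bigr)-\beta\bigl(\hat Q_{2n-2}\ftmm-\hat P_{2n-2}\bigr)}{z-1},
$$
and each bracket has degree $<-2n$ and $<-(2n-2)$ respectively (by Proposition~\ref{proposition_convergent_iff} applied to the two doubled convergents, which are legitimate by Lemma~\ref{lem_pqz2}), the right-hand side has degree $\le-2n<-(2n-1)=-\deg\tilde Q_{2n-1}$, so \eqref{eq_convergent} holds and $\tilde Q_{2n-1}$ has degree exactly $2n-1$. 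Having thus exhibited a convergent of every degree $1,2,3,\dots$, and since convergent denominators have strictly increasing degree, I conclude $\deg\hat Q_m=m$ for all $m$, whence every partial quotient has degree $1$; comparing parities in the recurrence \eqref{eq_pqn2} then forces $\hat a_{2n}(z)=z-1$ and $\hat a_{2n-1}(z)=z+1$, i.e. $a_{2n}$ and $a_{2n-1}$ are rational multiples of $z-1$ and $z+1$, and identifies $\hat Q_{2n}(z)=\hat Q_n(z^2)$ and $\hat Q_{2n-1}(z)=(z+1)Q^+_{n-1}(z^2)$ as asserted.

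The one point that must be carried along the induction is that $\hat Q_m(1)\ne 0$ for every $m$, so that $\beta$ is well defined at each odd step; I would maintain this as an extra invariant, tracking the integer values $\hat Q_m(1)$ through the relations $\hat Q_{2k}(1)=\hat Q_k(1)$ and $\hat Q_{2k-1}(1)=2\,Q^+_{k-1}(1)$ (the sample values $2,2,2,2,6,2,-2,\dots$ suggesting they remain nonzero) and checking they never vanish. This nonvanishing, together with the evenness of the even-degree denominators that upgrades a zero at $z=1$ to the full factor $z^2-1$, is the real engine behind the existence of the odd convergents and hence behind the whole statement.
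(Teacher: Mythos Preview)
Your route is genuinely different from the paper's. The paper never constructs the odd-degree convergents directly: it takes the doubled convergent $Q_{n+1}(z^2)$ of degree $2n+2$, and by substituting $z\mapsto -z$ into the recurrence and comparing parities it rules out the possibility that this is $Q_{2n+1}$, forcing it to be $Q_{2n+2}$; a second parity comparison then pins down $a_{2n+1}$ and $a_{2n+2}$. Your idea of manufacturing the missing odd convergent as an explicit linear combination of two doubled convergents is a nice constructive alternative, but as written it has two genuine gaps.

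The first you already flag: $\beta=\hat Q_n(1)/\hat Q_{n-1}(1)$ must be well defined, and ``the sample values suggesting they remain nonzero'' is not a proof. This can be supplied inside your induction. Once you have the consecutive convergents of degrees $2n-3$ and $2n-2$, their determinant $\hat P_{2n-2}\hat Q_{2n-3}-\hat P_{2n-3}\hat Q_{2n-2}$ is a nonzero constant; since $\hat P_{2n-2}(z)=(z-1)\hat P_{n-1}(z^2)$ vanishes at $z=1$, evaluating at $z=1$ forces $\hat Q_{2n-2}(1)=\hat Q_{n-1}(1)\ne 0$. The second gap you do not flag, and it is the more dangerous one. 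Proposition~\ref{proposition_convergent_iff} only tells you that the \emph{rational function} $\tilde P_{2n-1}/\tilde Q_{2n-1}$ is a convergent; if $\tilde P_{2n-1}$ and $\tilde Q_{2n-1}$ shared a nonconstant common factor, the actual convergent denominator would have degree strictly less than $2n-1$, and you would have exhibited nothing new --- so your sentence ``having thus exhibited a convergent of every degree'' would be unjustified. You must check coprimality. The clean way is to compute
\[
\tilde P_{2n-1}\,\hat Q_{2n-2}-\hat P_{2n-2}\,\tilde Q_{2n-1}
=\hat P_n(z^2)\hat Q_{n-1}(z^2)-\hat P_{n-1}(z^2)\hat Q_n(z^2),
\]
which is the nonzero constant $\hat P_n\hat Q_{n-1}-\hat P_{n-1}\hat Q_n$; any common divisor of $\tilde P_{2n-1}$ and $\tilde Q_{2n-1}$ would have to divide this constant. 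With these two points inserted, your argument goes through and gives a pleasant alternative to the paper's parity method; without them it does not stand.
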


\begin{proof} We reason by induction. We already checked this statement for $n=2$. Now
we need to check it for $2n+1$ and $2n+2$ given that the statement
for $3,4,\ldots, 2n$ is true.

Note that induction hypothesis and~\eqref{eq_pqn} imply that $\deg Q_k=k$ for $k=1,2,3,\dots,2n$. In particular, the convergent $Q_{n+1}$ has degree $n+1$.

By Lemma~\ref{lem_pqz2} we have that $Q^*_{n+1}(z) = Q_{n+1}(z^2)$
is also a convergent and $\deg Q^*_{n+1}=2n+2$. Moreover we have $\deg Q_{2n}=2n$, thus
$Q^*_{n+1}(z)$ has to coincide either with $Q_{2n+1}(z)$ or with
$Q_{2n+2}(z)$. The first case is actually impossible, because
otherwise we would have had, using induction hypothesis
and~\eqref{eq_pqn},
\begin{multline} \label{prop_part_q_case_one_2n+1}
Q^*_{n+1}(z)=Q_{n+1}(z^2) = Q_{2n+1}(z) = a_{2n+1}(z)\cdot Q_{2n}(z) + Q_{2n-1}(z) \\ = a_{2n+1}(z)\cdot Q_{n}(z^2) + (z+1)Q_{n-1}^+(z^2).
\end{multline}
To show that the equality~\eqref{prop_part_q_case_one_2n+1} is unattainable, substitute $-z$ in place of $z$ to~\eqref{prop_part_q_case_one_2n+1} and then apply $(-z)^2=z^2$. We find
\begin{equation} \label{prop_part_q_case_one_2n+1_modified}
Q_{n+1}(z^2)=a_{2n+1}(-z)\cdot Q_{n}(z^2) + (1-z)Q_{n-1}^+(z^2).
\end{equation}
Subtracting~\eqref{prop_part_q_case_one_2n+1_modified} from~\eqref{prop_part_q_case_one_2n+1} and dividing by 2 we obtain
\begin{equation} \label{prop_part_q_case_one_2n+1_modified_further}
\frac{a_{2n+1}(-z)-a_{2n+1}(z)}{2}\cdot Q_{n}(z^2) = zQ_{n-1}^+(z^2).
\end{equation}
The equality~\eqref{prop_part_q_case_one_2n+1_modified_further} is impossible, because its right hand side is a non-zero polynomial of degree 2n-1, and the left hand side is either a zero or a polynomial of degree at least $2n$. This contradiction shows that $Q^*_{n+1}(z)$ cannot coincide with $Q_{2n+1}(z)$, thus we have
$$
Q^*_{n+1}(z)=Q_{2n+2}(z).
$$
In particular we see that $\deg Q_{2n+2}=2n+2$, thus $\deg Q_{2n+1}=2n+1$ and so
\begin{equation} \label{deg_a_i_is_one}
\deg a_{2n+1}=\deg a_{2n+2}=1.
\end{equation}
Applying again the induction hypothesis and the
formulae~\eqref{eq_pqn} for convergents $Q_{2n+1}$ and $Q_{2n+2}$ we
have
\begin{align*}
Q_{2n+2}(z) &= Q_{n+1}(z^2) = a_{2n+2}(z)\cdot Q_{2n+1}(z)+Q_{2n}(z)
\\&=a_{2n+2}(z)\cdot a_{2n+1}(z)\cdot Q_{2n}(z)+a_{2n+2}(z)\cdot Q_{2n-1}(z)+Q_{2n}(z).
\\&=(a_{2n+2}(z)\cdot a_{2n+1}(z)+1)\cdot Q_{n}(z^2) + a_{2n+2}(z)\cdot (z+1)Q^+_{n-1}(z^2).
\end{align*}
Again, by substituting $-z$ in place of $z$ and then using $(-z)^2=z^2$ we deduce
\begin{multline} \label{eqn_ccc}
\frac{a_{2n+2}(z)\cdot a_{2n+1}(z)-a_{2n+2}(-z)\cdot a_{2n+1}(-z)}{2} Q_{n}(z^2)
\\=\frac{a_{2n+2}(-z)\cdot (-z+1)-a_{2n+2}(z)\cdot (z+1)}{2}Q_{n-1}(z^2)
\end{multline}
Both polynomials $a_{2n+2}(z)\cdot a_{2n+1}(z)-a_{2n+2}(-z)\cdot a_{2n+1}(-z)$ and $a_{2n+2}(-z)\cdot (-z+1)-a_{2n+2}(z)\cdot (z+1)$ are odd functions, so they are either 0 or of degree 1 (taking into account~\eqref{deg_a_i_is_one}). Therefore if the left hand side of~\eqref{eqn_ccc} is not zero then it has degree $2n+1$, and the right hand side in this case has degree $2n-1$. This is a contradiction, so we conclude that both sides of~\eqref{eqn_ccc} are 0. This gives us
\begin{align*}
a_{2n+2}(-z)\cdot (-z+1)&=a_{2n+2}(z)\cdot (z+1),\\
a_{2n+2}(z)\cdot a_{2n+1}(z)&=a_{2n+2}(-z)\cdot a_{2n+1}(-z).
\end{align*}

This system implies that $a_{2n+2}(z)$ is a
multiple of $z-1$ and $a_{2n+1}$ is a multiple of $z+1$. Now the fact that $Q_{2n+1}(z) = (z-1)Q^+_{n}(z^2)$ readily follows from~\eqref{eq_pqn}. This concludes the proof.
\end{proof}

To compute the convergents of $\ftmm(z)$ we just need to find the
precise values of coefficients $\alpha_n$ such that
$a_n(z)=\alpha_n(z-(-1)^n)$. To this end, it appears easier to make calculations with canonical representatives of the functional convergents, described at the end of the previous section, that is with the fractions $\hat{P}_n(z)/\hat{Q}_n(z)$ such that $\hat{P}_n(z)$ is a monic polynomial.

As we mentioned in the previous section, the
formulae~\eqref{eq_pqn} do not always (in fact, almost never)
produce monic polynomials $P_n(z)$ and $Q_n(z)$. For example, one
can check that if we start with $Q_1(z)=1+z$ and $Q_2(z)=z^2+1$ then
$a_3(z)=-z-1$ and therefore
$$
Q_3(z) = -(z+1)Q_2(z) + Q_1(z) = -z^3 - z^2.
$$
Moreover further calculations show that $P_n(z)$ and $Q_n(z)$ do not
always have integer coefficients.


So, as we are interested in monic numerator, in our case formulae~\eqref{eq_pqn2}
together with Proposition~\ref{prop_part_q} give
\begin{align} \label{formula_Q_n_new}
\hat{Q}_{n+1}(z) &= (z + (-1)^n)\hat{Q}_n(z) + \beta_{n+1}\hat{Q}_{n-1}(z),\\
\hat{P}_{n+1}(z) &= (z + (-1)^n)\hat{P}_n(z) +
\beta_{n+1}\hat{P}_{n-1}(z), \label{formula_P_n_new}
\end{align}
where $\beta_{n+1}=\frac{\alpha_n}{\alpha_{n+1}}$, $n\in\N$, $n\geq
2$. Polynomials $\hat{P}_n(z)$ and $\hat{Q}_n(z)$ are linked to the
original polynomials $P_n(z)$ and $Q_n(z)$ by
$\hat{Q}_n(z):=\frac{Q_n(z)}{\prod_{k=1}^n\alpha_k}$ and
$\hat{P}_n(z):=\frac{P_n(z)}{\prod_{k=1}^n\alpha_k}$ and one readily
verifies that they both are monic for all $n\in\N$.

\begin{proposition} \label{proposition_coefficients_CF}
The coefficients $\beta_n$ in~\eqref{formula_Q_n_new} and~\eqref{formula_P_n_new} can be computed recursively by the
following formulae
\begin{align}
&\beta_3 = -1,\; \beta_4 = 1,\\
&\beta_{2n+1} = \label{proposition_coefficients_CF_formula_two}
-\frac{\beta_{n+1}}{\beta_{2n}},\;\\
&\beta_{2n+2} = 1+(-1)^n-
\beta_{2n+1} \label{proposition_coefficients_CF_formula_three}
\end{align}
for every positive integer $n\ge 2$.
\end{proposition}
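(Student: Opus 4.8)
The plan is to determine the recursion for $\beta_n$ by exploiting the same parity/substitution trick that powered Proposition~\ref{prop_part_q}, now applied to the canonical recurrences~\eqref{formula_Q_n_new}. The key structural input is that $\hat{Q}_{2n}(z)=\hat{Q}_n(z^2)$ and $\hat{Q}_{2n-1}(z)=(z+1)\hat{Q}^+_{n-1}(z^2)$ (the monic normalizations of the forms found in Proposition~\ref{prop_part_q}). First I would evaluate both even-index and odd-index versions of the doubling relation from Lemma~\ref{lem_pqz2} in the canonical normalization. Concretely, since $\hat Q_{n+1}(z^2)$ is the canonical denominator of degree $2n+2$, it must equal $\hat Q_{2n+2}(z)$; writing $\hat Q_{2n+2}$ via two applications of~\eqref{formula_Q_n_new} in terms of $\hat Q_{2n}=\hat Q_n(z^2)$ and $\hat Q_{2n-1}=(z+1)\hat Q^+_{n-1}(z^2)$, I would compare coefficients to read off relations among the $\beta$'s.

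The main computation would proceed as follows. Expanding
\begin{align*}
\hat{Q}_{2n+2}(z) &= (z-1)\hat{Q}_{2n+1}(z)+\beta_{2n+2}\hat{Q}_{2n}(z)\\
&=(z-1)\big[(z+1)\hat{Q}_{2n}(z)+\beta_{2n+1}\hat{Q}_{2n-1}(z)\big]+\beta_{2n+2}\hat{Q}_{2n}(z),
\end{align*}
and substituting the even/odd forms together with the identity $\hat{Q}_{n+1}(z^2)=(z^2+\beta\text{-terms})$ coming from one step of~\eqref{formula_Q_n_new} at index $n$, I would match the two sides as polynomials in $z$. The even part in $z$ forces a relation that, after using $\hat{Q}_{n+1}(z^2)=(z^2-1)\hat Q_n(z^2)+\beta_{n+1}\hat Q_{n-1}(z^2)$ on the left, isolates $\beta_{2n+2}$ in terms of lower data; the odd part (the coefficient structure multiplying $(z+1)\hat Q^+_{n-1}(z^2)$) forces the relation tying $\beta_{2n+1}$ to $\beta_{n+1}$ and $\beta_{2n}$. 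Separating by parity in $z$ is exactly what will produce the two stated formulas~\eqref{proposition_coefficients_CF_formula_two} and~\eqref{proposition_coefficients_CF_formula_three}; the base cases $\beta_3=-1,\ \beta_4=1$ are checked directly from the explicit low-degree convergents in~\eqref{first_convergents}--\eqref{second_convergents}.

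The step I expect to be the main obstacle is bookkeeping the normalization constants correctly, namely tracking how $\beta_{n+1}=\rho_{n-1}/\rho_{n+1}$ at the original index $n$ relates to $\beta_{2n+1},\beta_{2n+2}$ at the doubled indices, since Lemma~\ref{lem_pqz2} sends $Q(z)\mapsto Q(z^2)$ but multiplies the numerator by $(z-1)$, so the leading coefficients $\rho$ transform in an index-shifting way that must be reconciled with the canonical (monic-$\hat P$) choice. I would handle this by expressing everything through the single substitution $z\mapsto z^2$ applied to~\eqref{formula_Q_n_new} at index $n$, which gives a clean target $\hat Q_{n+1}(z^2)=(z^2-1)\hat Q_n(z^2)+\beta_{n+1}\hat Q_{n-1}(z^2)$, and then equating it termwise with the twice-unfolded expression for $\hat Q_{2n+2}(z)$ above. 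Comparing the coefficient of $\hat Q_{n-1}(z^2)$ yields $\beta_{n+1}=(z-1)\beta_{2n+1}\cdot(\text{odd factor})$, which upon matching the $(z+1)$ factors gives $\beta_{2n+1}=-\beta_{n+1}/\beta_{2n}$, while collecting the $\hat Q_n(z^2)$-coefficients and using $z^2-1=(z-1)(z+1)$ isolates $\beta_{2n+2}=1+(-1)^n-\beta_{2n+1}$. Once the normalization is pinned down, the remainder is routine polynomial identification.
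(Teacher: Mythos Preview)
Your plan is essentially the paper's proof: compare the expression for $\hat Q_{2n+2}(z)$ obtained by substituting $z\mapsto z^2$ in \eqref{formula_Q_n_new} at index $n$ with the expression obtained by unfolding \eqref{formula_Q_n_new} twice at indices $2n+1,2n+2$, then read off the $\beta$-relations. Two small corrections: the substituted recurrence is $\hat Q_{n+1}(z^2)=(z^2+(-1)^n)\hat Q_n(z^2)+\beta_{n+1}\hat Q_{n-1}(z^2)$, not with $(z^2-1)$; and your ``parity in $z$'' separation will not do the work, because once you insert $\hat Q_{2n-1}(z)=(z+1)\hat Q^+_{n-1}(z^2)$ the term $(z-1)\beta_{2n+1}\hat Q_{2n-1}(z)=(z^2-1)\beta_{2n+1}\hat Q^+_{n-1}(z^2)$ is even, so every term in the comparison is even in $z$. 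The paper instead substitutes $(z-1)\hat Q_{2n-1}=\hat Q_{2n}-\beta_{2n}\hat Q_{2n-2}$ and then uses the linear independence of $\hat Q_{2n}$ and $\hat Q_{2n-2}$ (equivalently, it reads the coefficients of $z^{2n}$ and $z^{2n-2}$) to extract both relations; with that adjustment your argument goes through exactly as in the paper.
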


\begin{proof} The values $\beta_3$ and $\beta_4$ can be computed directly
from already known $Q_1(z), Q_2(z),$ $Q_3(z)$ and $Q_4(z)$ (see~\eqref{first_convergents} and~\eqref{second_convergents}). Next, we
substitute $z\mapsto z^2$ into the formula~\eqref{formula_Q_n_new} for $\hat{Q}_{n+1}(z)$ and use
Proposition~\ref{prop_part_q} 
to get that
$$
\hat{Q}_{2n+2}(z) = (z^2 +(-1)^n)\hat{Q}_{2n}(z) + \beta_{n+1} \hat{Q}_{2n-2}(z).
$$
On the other hand, by directly applying~\eqref{formula_Q_n_new} we
have
\begin{eqnarray*}
 \hat{Q}_{2n+2}(z) &=& (z-1)\hat{Q}_{2n+1}(z) + \beta_{2n+2}\hat{Q}_{2n}(z)\\
&=& (z-1)\left((z+1)\hat{Q}_{2n}(z) + \beta_{2n+1} \hat{Q}_{2n-1}(z)\right) +
\beta_{2n+2}Q_{2n}(z).
\end{eqnarray*}
By comparing these two formulae for $\hat{Q}_{2n+2}(z)$ we get the equation
\begin{equation} \label{proposition_coefficients_CF_formula_one}
(\beta_{2n+2}-1-(-1)^n)\hat{Q}_{2n}(z) + \beta_{2n+1}\cdot
(z-1)\hat{Q}_{2n-1}(z) - \beta_{n+1}\hat{Q}_{2n-2}(z) = 0.
\end{equation}
By looking at the coefficient of $z^{2n}$ we get
$\beta_{2n+2}-1-(-1)^n + \beta_{2n+1} = 0$, which proves~\eqref{proposition_coefficients_CF_formula_three}. The formula~\eqref{proposition_coefficients_CF_formula_two} is
achieved by substituting Formula~\eqref{formula_Q_n_new} for $\hat{Q}_{2n}(z)$ into Equation~\eqref{proposition_coefficients_CF_formula_one} and looking at the coefficient of $z^{2n-2}$:
\begin{eqnarray*}
-\beta_{2n+1}\cdot((z-1)\hat{Q}_{2n-1}(z) + \beta_{2n}\hat{Q}_{2n-2}(z)) +
\beta_{2n+1}\cdot (z-1)\hat{Q}_{2n-1}(z) - \beta_n\hat{Q}_{2n-2}(z) = 0.
\end{eqnarray*}
It readily follows $\beta_{2n+1}\beta_{2n} +\beta_{n+1} = 0$.
\end{proof}

Now we have precise recursive formulae to quickly compute
convergents to $\ftmm$ as far as we want. For example, the first few
convergents following $P_4(z) / Q_4(z)$ are
$$
\frac{z^4 - z^2 - 1}{(z+1)(z^4+z^2+1)},\; \frac{(z-1)(z^4-2)}{z^6 +
z^4},\; \frac{z^6-2z^4-z^2+3}{(z+1)(z^6-z^2-1)},\ldots
$$
The 9th convergent is of particular value for us so we write it
down as well:
\begin{equation} \label{ninth_convergent}
\hat{P}_9(z):=z^8-3z^6+2z^4+3z^2-4,\quad \hat{Q}_9(z):=(z+1)(z^8-z^6+z^2+2).
\end{equation}
This convergent plays the central role in our proof that $\cTM$ is not badly approximable.

\section{Rational approximations to the Thue-Morse constants.}

We start by extracting a specific subsequence of convergents to $\ftmm$.
\begin{definition} \label{def_tP_tQ}
Let $n\in\N$. Define
$$
\tP_{n}(z):=\prod_{k=0}^{n}(z^{2^k}-1)\hat{P}_9(z^{2^{n+1}})
$$
and
$$
\tQ_{n}(z):=\hat{Q}_9(z^{2^{n+1}}),
$$
where $\hat{P}_9(z)/\hat{Q}_9(z)$ is the 9th functional convergent
to $\ftmm$ given at the end of the previous section
(see~\eqref{ninth_convergent}).
\end{definition}

\begin{remark}
{\rm Iteratively applying Lemma~\ref{lem_pqz2} we find that
$\tP_n(z) = \hat{P}_{9\cdot 2^{n+1}}(z)$ and $\tQ_n(z) =
\hat{Q}_{9\cdot 2^{n+1}}(z)$. In particular, $\tP_n(z)/\tQ_n(z)$ is
indeed a convergent to $\ftmm(z)$.}
\end{remark}


\begin{lemma} \label{lemma_rational_approximation_n}
For every $n\in\NN$ and $z_0\in\N$, $z_0\geq 2$ the integers $\tP_n(z_0)$ and
$\tQ_n(z_0)$ satisfy the following Diophantine approximation
properties:
\begin{eqnarray}
\left|\ftmm(z_0)-\frac{\tilde{P}_n(z_0)}{\tilde{Q}_n(z_0)}\right|&\leq& C\cdot z_0^{-36 \cdot 2^n}, \label{ie_convergent_numeric} \\
\tilde{Q}_{n}(z_0)&\leq&2\cdot z_0^{18\cdot 2^{n}},\label{ie_deQ_numeric}
\end{eqnarray}
where the constant $C = C(z_0)$ is independent of $n$.
\end{lemma}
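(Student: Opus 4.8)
The plan is to verify the two inequalities separately, using the explicit form of $\tQ_n$ and $\tP_n$ from Definition~\ref{def_tP_tQ} together with the characterization of convergents in Proposition~\ref{proposition_convergent_iff}.

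First I would establish the degree bound~\eqref{ie_deQ_numeric}. Since $\hat Q_9(z)=(z+1)(z^8-z^6+z^2+2)$ has degree $9$, we have $\deg\hat Q_9(z^{2^{n+1}})=9\cdot 2^{n+1}=18\cdot 2^n$, so $\tQ_n(z)$ is a monic polynomial of degree $18\cdot 2^n$. For an integer $z_0\geq 2$, the value $\tQ_n(z_0)$ is dominated by its leading term, and a crude bound on the lower-order coefficients (all the coefficients of $\hat Q_9$ are bounded) yields $\tQ_n(z_0)\leq 2z_0^{18\cdot 2^n}$. This step is essentially a routine estimate on a monic integer polynomial evaluated at $z_0\geq 2$, where the geometric decay of powers of $z_0^{-1}$ keeps the total under twice the leading term.

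For~\eqref{ie_convergent_numeric} I would use that $\tP_n/\tQ_n$ is a convergent to $\ftmm$, so by Proposition~\ref{proposition_convergent_iff},
\begin{equation*}
\deg\bigl(\tQ_n(z)\ftmm(z)-\tP_n(z)\bigr)<-\deg\tQ_n(z)=-18\cdot 2^n.
\end{equation*}
This means the Laurent series $\tQ_n(z)\ftmm(z)-\tP_n(z)$ starts at a power $z^{-h}$ with $h\geq 18\cdot 2^n+1$. Writing $\tQ_n(z)\ftmm(z)-\tP_n(z)=\sum_{k\geq h}c_k z^{-k}$ with $|c_k|$ bounded (the coefficients of $\ftmm$ are all $\pm 1$, and $\tQ_n$ has controlled coefficients), I would evaluate at $z=z_0\geq 2$ and sum the resulting geometric-type series to get $|\tQ_n(z_0)\ftmm(z_0)-\tP_n(z_0)|\leq C' z_0^{-h}\leq C' z_0^{-18\cdot 2^n-1}$. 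Dividing by $\tQ_n(z_0)$ and using the lower bound $\tQ_n(z_0)\geq z_0^{18\cdot 2^n}$ (again from monicity and $z_0\geq 2$) converts this into the desired bound $C z_0^{-36\cdot 2^n}$, where the exponent $36\cdot 2^n=2\cdot 18\cdot 2^n$ comes precisely from combining the numerator decay with one factor of $1/\tQ_n(z_0)$, and the extra power of $z_0^{-1}$ from the strict inequality in~\eqref{eq_convergent} absorbs the summation of the tail.

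The main obstacle is controlling the constant $C=C(z_0)$ and ensuring it is genuinely independent of $n$: the coefficients $c_k$ of $\tQ_n(z)\ftmm(z)-\tP_n(z)$ depend on $n$, so I need a uniform bound on them before summing the tail. I expect this to follow from the fact that $\tQ_n$ is monic with coefficients that, although spread out over high powers, are bounded in absolute value by the corresponding coefficients of $\hat Q_9$ (which are fixed), so that the product $\tQ_n(z)\ftmm(z)$ has coefficients bounded uniformly in $n$ by a convolution of two bounded sequences. Once this uniform coefficient bound is in hand, the geometric summation $\sum_{k\geq h}|c_k|z_0^{-k}\leq C'' z_0^{-h}/(1-z_0^{-1})$ gives a constant depending only on $z_0$, completing the estimate.
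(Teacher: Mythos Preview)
Your approach is correct, but it differs from the paper's in how the uniform constant is obtained. You work with the Laurent tail of $\tQ_n(z)\ftmm(z)-\tP_n(z)$ directly, and the uniform bound you need on the coefficients $c_k$ indeed holds: since $\tQ_n(z)=\hat Q_9(z^{2^{n+1}})$ has at most ten nonzero coefficients, each bounded in absolute value by $2$, the convolution with the $\pm1$ coefficients of $\ftmm$ yields $|c_k|\le 20$ independently of $n$. So your obstacle is not a real obstacle, and the geometric summation goes through exactly as you describe.

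The paper instead exploits the functional equation~\eqref{func_eq_2}. It introduces the fixed function $F(z)=\ftmm(z)-\hat P_9(z)/\hat Q_9(z)$, observes that $F(z)=6z^{-19}+O(z^{-20})$, and then uses iterated applications of $\ftmm(z)=(z-1)\ftmm(z^2)$ to write
\[
\ftmm(z)-\frac{\tP_n(z)}{\tQ_n(z)}=\Bigl(\prod_{k=0}^{n}(z^{2^k}-1)\Bigr)\,F(z^{2^{n+1}}).
\]
Now the $n$-dependence sits entirely in the argument $z^{2^{n+1}}$ of the \emph{fixed} function $F$, so a single bound $|F(w)|\le C|w|^{-19}$ valid for $|w|\ge 2$ immediately gives the desired estimate after multiplying by $\prod_{k=0}^n(z_0^{2^k}-1)<z_0^{2^{n+1}}$. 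This sidesteps any bookkeeping on the tail coefficients of $\tQ_n\ftmm$. The advantage of the paper's route is conceptual: the uniformity in $n$ is automatic once the difference is expressed through one fixed Laurent series, and the same device is reused verbatim in the more general Theorem~\ref{th_tma} with $\hat Q_9$ replaced by an arbitrary $\hat Q_t$. Your approach, while perfectly sound here, would need to be re-examined for each choice of $\hat Q_t$ (though it would still work, since $\hat Q_t$ always has finitely many coefficients).

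For~\eqref{ie_deQ_numeric} the two arguments are essentially the same: both reduce to the elementary bound $\hat Q_9(z_0)\le 2z_0^9$ for $z_0\ge 2$.
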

\begin{proof}
Consider the following function:
\begin{equation} \label{def_F}
F(z):=\ftmm(z)-\frac{\hat{P}_9(z)}{\hat{Q}_9(z)}.
\end{equation}
As $\hat{P}_9(z)/\hat{Q}_9(z)$ is the 9th convergent to $\ftmm(z)$, it follows from Proposition~\ref{proposition_convergent_iff} that $\deg(F)\leq -19$.

In fact, one can check by an explicit calculation that $F(z)$, being an infinite series in $\frac{1}{z}$, starts from the term $\frac{6}{z^{19}}$, that is
\begin{equation} \label{eq_F_order_of_approximation}
F(z)=\frac{6}{z^{19}}+O\left(\frac{1}{z^{20}}\right).
\end{equation}
Further, consider $F(z^{2^{n+1}})\prod_{k=1}^{n}(z^{2^k}-1)$ where $n\in\N$. By using the
definition~\eqref{def_F} of $F$, functional
equation~\eqref{func_eq_2} and
estimate~\eqref{eq_F_order_of_approximation} we find
\begin{multline} \label{lemma_rational_approximation_n_one}
F(z^{2^{n+1}})\prod_{k=1}^{n}(z^{2^k}-1)=\ftmm(z)-\frac{\hat{P}_9(z^{2^{n+1}})\prod_{k=1}^{n}(z^{2^k}-1)}{\hat{Q}_9(z^{2^{n+1}})}\\=\frac{6\prod_{k=1}^{n}(z^{2^k}-1)}{z^{19\cdot
2^{n+1}}}+O\left(\frac{\prod_{k=1}^{n}(z^{2^k}-1)}{z^{20\cdot
2^{n+1}}}\right),
\end{multline}
where the constant implied by the symbol $O(\cdot)$ is independent of $n$.

By Definition~\ref{def_tP_tQ},
$$
\ftmm(z)-\frac{\hat{P}_9(z^{2^{n+1}})\prod_{k=1}^{n}(z^{2^k}-1)}{\hat{Q}_9(z^{2^{n+1}})}=\ftmm(z)-\frac{\tilde{P}_{n}(z)}{\tilde{Q}_{n}(z)}.
$$
At the same time, we have $\prod_{k=1}^{n}(z^{2^k}-1)<z^{2^{n+1}}$ for any $z>1$, hence we infer from~\eqref{lemma_rational_approximation_n_one}
$$
\ftmm(z)-\frac{\tilde{P}_{n}(z)}{\tilde{Q}_{n}(z)}=\frac{6}{z^{18\cdot 2^{n+1}}}+O\left(\frac{1}{z^{19\cdot
2^{n+1}}}\right)=\frac{6}{z^{36\cdot 2^{n}}}+O\left(\frac{1}{z^{38\cdot
2^{n}}}\right)
$$
and~\eqref{ie_convergent_numeric} follows.

Further, a straightforward calculation shows that
$\hat{Q}_9(z_0)\le 2z_0^9$ for every $z_0\geq 2$. Then
$$
\tilde{Q}_{n}(z_0):=\hat{Q}_9(z_0^{2^{n+1}})\leq 2\cdot z_0^{9\cdot
2^{n+1}}=2\cdot z_0^{18\cdot 2^{n}},
$$
which proves~\eqref{ie_deQ_numeric}.
\end{proof}

This lemma shows that for $z_0\in\NN$, $z_0\geq 2$,
$\tP_n(z_0)/\tQ_n(z_0)$ already provides sufficiently good rational
approximation to $\ftmm(z_0)$ (following the scheme presented in the
introduction, they provide approximations
satisfying~\eqref{intro_sufficiently_good_approximation}). In order
to show that $\ftmm(z_0)$ is not badly approximable it is sufficient
for every $r>1$ to find an $n\in\NN$ such that both $\tP_n(z_0)$ and
$\tQ_n(z_0)$ have a common factor bigger than $r$. We prove this
fact in Lemma~\ref{crucial_simplification}.

In further discussion we will stick to the case $z_0=2$, however as
we will see in the next section similar ideas should work for other
positive integers $z_0$.

The following chain of simple lemmas prepares the proof of our
essential ingredient, Lemma~\ref{crucial_simplification}. We start
with the following classical result which can be found for instance
in ~\cite{NZM1991}[p.~102].
\begin{lemma} \label{lemma_primitive_root}
Let $p$ be an odd prime and $g$ be a primitive root modulo $p^2$. Then $g$ is a primitive
root modulo $p^m$ for all $m\in\N$.
\end{lemma}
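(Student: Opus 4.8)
The plan is to prove the lemma by induction on $m$, the case $m=2$ being the hypothesis and the case $m=1$ being immediate (the reduction modulo $p$ of a primitive root modulo $p^2$ is a primitive root modulo $p$). So I fix $m \ge 2$, assume $g$ has multiplicative order $\varphi(p^m) = p^{m-1}(p-1)$ in $(\mathbb{Z}/p^m\mathbb{Z})^{*}$, and aim to show it has order $\varphi(p^{m+1}) = p^{m}(p-1)$ modulo $p^{m+1}$. Writing $d$ for the order of $g$ modulo $p^{m+1}$, the congruence $g^{d} \equiv 1 \pmod{p^{m+1}}$ forces $g^{d} \equiv 1 \pmod{p^m}$, so by the inductive hypothesis $\varphi(p^m) \mid d$; on the other hand Lagrange's theorem gives $d \mid \varphi(p^{m+1})$. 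Since $\varphi(p^{m+1}) = p\cdot\varphi(p^m)$, the only possibilities are $d = p^{m-1}(p-1)$ and $d = p^{m}(p-1)$, so the whole statement reduces to excluding the first, i.e. to proving
$$
g^{\,p^{m-1}(p-1)} \not\equiv 1 \pmod{p^{m+1}}.
$$

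The engine of the argument is a lifting lemma that I would isolate and prove by a single binomial expansion: if $p$ is odd, $k \ge 1$, and $a = 1 + c\,p^{k}$ with $p \nmid c$, then $a^{p} = 1 + c'\,p^{k+1}$ with $p \nmid c'$. Indeed,
$$
(1 + c\,p^{k})^{p} = 1 + c\,p^{k+1} + \binom{p}{2} c^{2} p^{2k} + \sum_{j=3}^{p} \binom{p}{j} c^{j} p^{jk}.
$$
Because $p$ is odd, $\binom{p}{2} = \tfrac{p-1}{2}\,p$ carries an extra factor of $p$, so the term with $j=2$ is divisible by $p^{2k+1}$, while each term with $j \ge 3$ is divisible by $p^{jk}$; as $k \ge 1$ all these exponents are at least $k+2$. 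Hence $a^{p} \equiv 1 + c\,p^{k+1} \pmod{p^{k+2}}$, giving $c' \equiv c \not\equiv 0 \pmod{p}$. This is the only place the oddness of $p$ is used, and I expect it to be the crux of the argument: for $p = 2$ the $\binom{p}{2}$ term lacks the extra factor of $p$ and the conclusion genuinely fails (already $3$ is a primitive root modulo $4$ but not modulo $8$).

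It remains to supply a base case and iterate. By Fermat's little theorem $g^{p-1} \equiv 1 \pmod{p}$, so $g^{p-1} = 1 + c_1 p$ for some integer $c_1$; moreover $p \nmid c_1$, since $g^{p-1} \equiv 1 \pmod{p^2}$ would force the order of $g$ modulo $p^2$ to divide $p-1$, contradicting that it equals $p(p-1)$. This is exactly the case $k=1$ of the lifting lemma, and applying the lemma repeatedly yields, for every $k \ge 1$,
$$
g^{\,p^{k-1}(p-1)} = 1 + c_k\,p^{k}, \qquad p \nmid c_k.
$$
Taking $k = m$ gives $g^{\,p^{m-1}(p-1)} \equiv 1 + c_m p^{m} \not\equiv 1 \pmod{p^{m+1}}$, which excludes the value $d = p^{m-1}(p-1)$ and forces $d = \varphi(p^{m+1})$. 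This closes the induction and shows that $g$ is a primitive root modulo $p^{m}$ for all $m \in \N$.
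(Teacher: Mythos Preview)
Your proof is correct and is essentially the standard textbook argument. The paper itself does not prove this lemma: it simply cites it as a classical result from Niven--Zuckerman--Montgomery, so there is no in-paper proof to compare against, but your binomial-lifting argument is exactly the one found in that reference.
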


One can check that $2$ is a primitive root modulo $3^2$ therefore
a direct corollary of this Lemma is that $2$ is a primitive root
modulo $3^m$ for every $m\in\NN$.


\begin{lemma} \label{lemma_first_congruence}
Let $m\in\N$ and let $t$ be an even integer, $t\not\equiv 0\pmod 3$.
Then there exists $n\in\N$ such that
$$
2^n\equiv t \mod 2\cdot 3^m.
$$
Moreover, one can choose such $n$ to verify additionally $n\leq 2\cdot 3^{m-1}$.
\end{lemma}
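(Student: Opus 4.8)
The plan is to split the congruence modulo $2\cdot 3^m$ into its coprime factors $2$ and $3^m$ via the Chinese Remainder Theorem, and to solve each part separately. First I would observe that modulo $2$ the desired relation is automatic as soon as $n\geq 1$: since $t$ is even we have $t\equiv 0\pmod 2$, while $2^n\equiv 0\pmod 2$ for every $n\geq 1$. Thus the whole problem reduces to finding $n$ with $2^n\equiv t\pmod{3^m}$ subject to the additional constraint $n\geq 1$.

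For the congruence modulo $3^m$ I would invoke the corollary to Lemma~\ref{lemma_primitive_root} noted above, namely that $2$ is a primitive root modulo $3^m$ for every $m$. Consequently the powers $2^0,2^1,\dots,2^{2\cdot 3^{m-1}-1}$ run exactly once through all residues coprime to $3$ modulo $3^m$, the group $(\Z/3^m\Z)^\times$ being cyclic of order $\varphi(3^m)=2\cdot 3^{m-1}$. Since $t\not\equiv 0\pmod 3$, the residue of $t$ lies in this group, so there is a unique $n_0$ with $0\leq n_0\leq 2\cdot 3^{m-1}-1$ and $2^{n_0}\equiv t\pmod{3^m}$. As the order of $2$ is $2\cdot 3^{m-1}$, every solution of $2^n\equiv t\pmod{3^m}$ satisfies $n\equiv n_0\pmod{2\cdot 3^{m-1}}$.

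It then remains to choose among these solutions one that also satisfies $n\geq 1$ while respecting the bound $n\leq 2\cdot 3^{m-1}$. If $n_0\geq 1$ then $n:=n_0\leq 2\cdot 3^{m-1}-1$ already works. The only delicate case is $n_0=0$, which occurs precisely when $t\equiv 1\pmod{3^m}$; here I would instead take $n:=2\cdot 3^{m-1}$, which is positive, congruent to $0$ modulo the order of $2$, and meets the required bound with equality. In either case $n\geq 1$, so the automatic congruence modulo $2$ applies; combining it with $2^n\equiv t\pmod{3^m}$ through the Chinese Remainder Theorem yields $2^n\equiv t\pmod{2\cdot 3^m}$, as desired. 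I expect no serious obstacle here: the only point requiring care is this edge case $n_0=0$, where naively reporting the discrete logarithm would give $n=0$ and violate the evenness of $2^n$. For this reason the reduction to the modulus $3^m$ must be carried out jointly with the lower bound $n\geq 1$ rather than in isolation.
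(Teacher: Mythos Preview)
Your proof is correct and follows essentially the same route as the paper: both reduce to the fact that $2$ is a primitive root modulo $3^m$ and then handle the factor~$2$ separately. The paper's version is marginally slicker in that it writes $t=2k$ and solves $2^{n-1}\equiv k\pmod{3^m}$, so that multiplying by~$2$ simultaneously yields the congruence modulo $2\cdot 3^m$ and the bound $1\le n\le 2\cdot 3^{m-1}$ without your case split at $n_0=0$.
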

\begin{proof}
As $t$ is even, there exists $k\in\N$ such that $t=2k$. Since $2$ is
a primitive root modulo $3^m$ and $k$ is coprime to $3$, there
exists $n\in\N$ such that
\begin{equation} \label{lemma_first_congruence_one}
2^{n-1}\equiv k\mod 3^m,
\end{equation}
where $k$ is as above. Multiplying the congruence~\eqref{lemma_first_congruence_one} by $2$ we find
$$
2^{n}\equiv t\mod 2\cdot 3^m,
$$
hence the claim.

To prove the concluding part of the lemma, note that the size of multiplicative group of residues modulo $3^m$ is $2\cdot 3^{m-1}$. So in~\eqref{lemma_first_congruence_one} we can always choose $n$ verifying $n-1\leq 2\cdot 3^{m-1}-1$, and the second claim of the lemma follows.
\end{proof}

In the next lemma we use the following notation.
\begin{notation} \label{a_db_b}
Let $a,b\in\Z$. We write $a\,||\, b$ if $a$ divides $b$, but $a^2$ does not.
\end{notation}
\begin{lemma} \label{lemma_main_congruence}
Let $m\in\N$, $m\geq 2$ and let $t$ be an integer such that $3\,||\, t-1$. Then
there exists $n\in\N$ such that
$$
2^{2^n}\equiv t \mod 3^m.
$$
Moreover, one can choose such $n$ to verify additionally $n\leq 2\cdot 3^{m-2}$.
\end{lemma}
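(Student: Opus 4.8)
The plan is to reduce this statement to Lemma~\ref{lemma_first_congruence} by passing through a discrete logarithm. Since $2$ is a primitive root modulo $3^m$ (the corollary to Lemma~\ref{lemma_primitive_root}), the group $(\Z/3^m\Z)^*$ is cyclic of order $\phi(3^m)=2\cdot 3^{m-1}$ and generated by $2$. As $t$ is coprime to $3$, I would write it uniquely as $t\equiv 2^s\pmod{3^m}$, with $s$ determined modulo $2\cdot 3^{m-1}$. The point is that solving $2^{2^n}\equiv t\pmod{3^m}$ is then \emph{equivalent} to solving $2^n\equiv s\pmod{2\cdot 3^{m-1}}$, because two powers of the primitive root $2$ agree modulo $3^m$ exactly when their exponents agree modulo its order $2\cdot 3^{m-1}$. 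This latter congruence is precisely of the shape treated by Lemma~\ref{lemma_first_congruence}, with the modulus parameter $m$ replaced by $m-1$ (legitimate since $m\geq 2$), provided $s$ satisfies that lemma's hypotheses.

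The crux is therefore to show that the valuation assumption $3\,||\,t-1$ forces $s$ to be even and $s\not\equiv 0\pmod 3$. For the parity I would reduce $t\equiv 2^s$ modulo $3$: since $2\equiv -1\pmod 3$ this gives $t\equiv(-1)^s\pmod 3$, so $t\equiv 1\pmod 3$ yields $s$ even. Writing $s=2k$, I would next reduce modulo $9$ (valid because $m\geq 2$): here $t\equiv 4^k\pmod 9$, and a one-line check shows $4$ has order $3$ modulo $9$, so $t\equiv 1\pmod 9$ if and only if $3\mid k$. Since $3\,||\,t-1$ guarantees $t\not\equiv 1\pmod 9$, we get $3\nmid k$, whence $3\nmid s$. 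Thus $s$ is even and coprime to $3$, exactly as needed. (These two conditions depend only on the residue of $s$ modulo $2\cdot 3^{m-1}$, which is divisible by both $2$ and $3$, so they are well posed.)

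With these properties in hand, I would apply Lemma~\ref{lemma_first_congruence} to the integer $s$ with modulus parameter $m-1$, obtaining $n\in\N$ such that $2^n\equiv s\pmod{2\cdot 3^{m-1}}$ together with the bound $n\leq 2\cdot 3^{(m-1)-1}=2\cdot 3^{m-2}$. Raising $2$ to these congruent exponents then gives $2^{2^n}\equiv 2^s\equiv t\pmod{3^m}$, which is the desired conclusion, and the exponent bound coincides with the claimed $n\leq 2\cdot 3^{m-2}$ with no further work.

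I do not expect a genuine obstacle here: the argument is a clean reduction to the preceding lemma, and the only step demanding care is translating the valuation hypothesis $3\,||\,t-1$ into the two elementary conditions on the discrete logarithm $s$. The cleanest route to that translation is the reduction modulo $9$ combined with the fact that $4$ has order $3$ there, which sidesteps any explicit description of the subgroup filtration of $(\Z/3^m\Z)^*$.
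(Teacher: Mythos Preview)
Your proposal is correct and follows essentially the same route as the paper: take the discrete logarithm $s$ of $t$ base $2$ modulo $3^m$, use the hypothesis $3\,\|\,t-1$ via reductions modulo $3$ and $9$ to see that $s$ is even and not divisible by $3$, and then invoke Lemma~\ref{lemma_first_congruence} with parameter $m-1$ to solve $2^n\equiv s\pmod{2\cdot 3^{m-1}}$ together with the bound $n\le 2\cdot 3^{m-2}$. The only cosmetic difference is that the paper works directly with $2^k\pmod 9$ rather than writing $s=2k$ and passing to $4^k$, and it spells out the final step via Euler's theorem rather than your phrasing in terms of exponents agreeing modulo the group order; these are the same argument.
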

\begin{proof}
Since 2 is a primitive root modulo $3^m$ there is a $k\in\N$ such
that
\begin{equation} \label{lemma_main_congruence_first_congruence}
2^k\equiv t \mod 3^m.
\end{equation}
Reducing congruence~\eqref{lemma_main_congruence_first_congruence} modulo 3 we find (using our assumption on $t$)
$$
2^k\equiv 1 \mod 3,
$$
hence $k$ is an even positive integer. Furthermore, by
reducing~\eqref{lemma_main_congruence_first_congruence} modulo 9, we
get $2^k\not\equiv 1\pmod 9$ therefore $k$ is not a multiple of 3.

By Lemma~\ref{lemma_first_congruence} we have that there exists an $n\in\N$ such that
$$
2^n\equiv k\mod 2\cdot 3^{m-1}
$$
and $n\leq 2\cdot 3^{m-2}$.

At the same time, by Euler's theorem we have
$$
2^{2\cdot 3^{m-1}}\equiv 1 \mod 3^m,
$$
as $\phi(3^m)=3^m-3^{m-1}=2\cdot 3^{m-1}$. Therefore
$$
2^{2^n}\equiv 2^k \mod 3^m,
$$
and we conclude by comparing this last congruence with the congruence~\eqref{lemma_main_congruence_first_congruence}.
\end{proof}

\begin{lemma} \label{crucial_simplification}
For any $m\in\N$, $m\geq 3$ there exists an index $n_m$ such that both integers
$\tilde{P}_{n_m}(2)$ and $\tilde{Q}_{n_m}(2)$ are divisible by
$3^m$, and moreover $n_m\leq 3^{m-1}$.
\end{lemma}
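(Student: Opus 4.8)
The plan is to treat the two required divisibilities separately, since $\tP_n(2)$ and $\tQ_n(2)$ behave quite differently modulo powers of $3$. Recall from Definition~\ref{def_tP_tQ} that, writing $w:=2^{2^{n+1}}$, we have $\tQ_n(2)=\hat Q_9(w)$ and $\tP_n(2)=\prod_{k=0}^n(2^{2^k}-1)\cdot\hat P_9(w)$. First I would dispose of $\tP_n(2)$: for every $k\ge 1$ one has $2^{2^k}\equiv 1\pmod 3$, so $3\mid 2^{2^k}-1$, whence $3^n$ divides $\prod_{k=0}^n(2^{2^k}-1)$ and therefore $3^n\mid\tP_n(2)$. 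Thus $3^m\mid\tP_n(2)$ as soon as $n\ge m$, and the numerator imposes no extra constraint (indeed $w\equiv 1\pmod 3$ gives $\hat P_9(w)\equiv 2\pmod 3$, so the $3$-adic valuation of $\tP_n(2)$ is exactly $n$, but only the lower bound $n\ge m$ is needed).

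The substance of the lemma is the divisibility of $\tQ_n(2)=\hat Q_9(w)$. I would factor $\hat Q_9(z)=(z+1)R(z)$ with $R(z)=z^8-z^6+z^2+2$. Since $w\equiv 1\pmod 3$, the factor $w+1\equiv 2$ is coprime to $3$, so the entire $3$-divisibility must come from $R(w)$, and the goal becomes $3^m\mid R(w)$. Here I would invoke Hensel's lemma: a direct computation gives $R(1)=3$ and $R'(1)=4$, so $z=1$ is a simple root of $R$ modulo $3$; hence for every $m$ there is a unique residue $t_m\equiv 1\pmod 3$ with $R(t_m)\equiv 0\pmod{3^m}$. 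Checking modulo $9$ (the residues $1,4,7$ give $R\equiv 3,6,0\pmod 9$) shows $t_m\equiv 7\pmod 9$, so that $3\,||\,(t_m-1)$ — precisely the hypothesis of Lemma~\ref{lemma_main_congruence}.

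With $t_m$ in hand, Lemma~\ref{lemma_main_congruence} supplies an index $n'$ with $2^{2^{n'}}\equiv t_m\pmod{3^m}$ and $n'\le 2\cdot 3^{m-2}$. Writing $n'=n+1$, this forces $w=2^{2^{n+1}}\equiv t_m\pmod{3^m}$, hence $R(w)\equiv R(t_m)\equiv 0\pmod{3^m}$ and $3^m\mid\tQ_n(2)$. The one remaining issue — and what I expect to be the main obstacle — is meeting the two constraints simultaneously inside the stated bound: the $\tP$-condition needs $n\ge m$, whereas the index from Lemma~\ref{lemma_main_congruence} only satisfies $n'\le 2\cdot 3^{m-2}$ with no lower bound. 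I would resolve this by periodicity. Since $2$ is a primitive root and $\mathrm{ord}_{3^{m-1}}(2)=2\cdot 3^{m-2}$, increasing $n'$ by $2\cdot 3^{m-2}$ leaves $2^{2^{n'}}\bmod 3^m$ unchanged, so the congruence $2^{2^{n'}}\equiv t_m$ persists along the whole residue class of $n'$ modulo $2\cdot 3^{m-2}$.

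It then suffices to choose the representative placing $n=n'-1$ in the window $[m,3^{m-1}]$. If the index from the lemma already has $n'-1\ge m$, set $n_m=n'-1\le 2\cdot 3^{m-2}-1<3^{m-1}$; otherwise replace $n'$ by $n'+2\cdot 3^{m-2}$, giving $n_m=n'-1+2\cdot 3^{m-2}$, for which $n_m\ge 2\cdot 3^{m-2}\ge m$ and $n_m\le(m-1)+2\cdot 3^{m-2}\le 3^{m-1}$, the last inequality amounting to $m-1\le 3^{m-2}$, which holds for all $m\ge 3$. In either case $m\le n_m\le 3^{m-1}$, and both $3^m\mid\tP_{n_m}(2)$ and $3^m\mid\tQ_{n_m}(2)$ hold, which is exactly the assertion of the lemma.
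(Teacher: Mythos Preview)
Your proof is correct and follows essentially the same route as the paper: Hensel's lemma on $\hat Q_9$ at the root $1\pmod 3$, Lemma~\ref{lemma_main_congruence} to hit that lift with $2^{2^{n'}}$, and the observation that $3^n\mid\tP_n(2)$ via the product factors, with periodicity $2\cdot 3^{m-2}$ in $n'$ to reconcile the lower bound $n\ge m$ with the upper bound $n_m\le 3^{m-1}$. The only cosmetic differences are that you factor out $(z+1)$ before applying Hensel (the paper works directly with $\hat Q_9$) and that you keep careful track of the shift $n=n'-1$ between the exponent in Lemma~\ref{lemma_main_congruence} and the index in Definition~\ref{def_tP_tQ}, where the paper's bookkeeping is a touch looser.
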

\begin{proof}
We verify by a direct calculation that $Q_9(1)=6$ is divisible by 3
but not by 9, and
$$
Q_9'(1)=11\not\equiv 0 \mod 3.
$$
So by Hensel's lemma, for every $m\in\N$, $m\geq 1$, there exists a solution $x_m\in\N$ to the congruence
$$
Q_9(x_m)\equiv 0 \mod 3^m
$$
such that this solution $x_m$ is congruent to $1$ modulo $3$ and
$x_m\not\equiv 1\pmod 9$. By Lemma~\ref{lemma_main_congruence},
there exists $t_m\in\N$ such that $2^{2^{t_m}}\equiv x_m\mod 3^m$,
thus
\begin{equation} \label{tQ_tm_conclusion}
\tQ_{n_m}(2)=Q_9(2^{2^{t_m}})\equiv 0 \mod 3^m.
\end{equation}
Moreover, by the same Lemma we can choose $t_m$ to satisfy $t_m\leq 2\cdot 3^{m-2}$.

One can easily check that if  $t_m$ is a solution to the
equation~\eqref{tQ_tm_conclusion} then every integer $t>1$ such that
$t\equiv t_m\pmod{\phi(2\cdot 3^{m-1})}$ is also a solution. So for
any $l\in\N$ a number $t_m+l\cdot 2\cdot 3^{m-2}$ also provides a
solution to~\eqref{tQ_tm_conclusion}.

If $t_m>m$ then we choose $n_m:=t_m$, otherwise $n_m:=t_m+2\cdot
3^{m-2}$. Note that for $m\geq 3$ we have $3^{m-2}\geq m$, thus our
definition of $n_m$ assures that for $m\ge 3$ we have $n_m>m$ and
$n_m\leq 3^{m-1}$.


Further, note that for any $k\in\N$ we have $2^{2^k}\equiv 1 \mod 3$. Therefore
$$
\prod_{k=0}^{n_m-1}(2^{2^k}-1)\equiv 0 \mod 3^{n_m-1}
$$
and we readily have that
$$
\tilde{P}_{n_m}(2):=\prod_{k=0}^{n_m-1}(2^{2^k}-1)P_9(2^{2^{n_m+1}})\equiv
0 \mod 3^{n_m-1}.
$$
Finally, we infer from $n_m>m$ that
\begin{equation} \label{tP_tm_conclusion}
\tP_{n_m}(2)\equiv 0\mod 3^m.
\end{equation}
Congruences~\eqref{tQ_tm_conclusion} and~\eqref{tP_tm_conclusion}
show that $n_m$ indeed verifies the properties claimed in the
statement of the lemma, and this completes the proof.
\end{proof}

\begin{theorem} \label{theo_main_one}
Thue-Morse constant $\tau_{TM}$ is not badly approximable. Moreover,
there exists a constant $c>0$ such that the inequality
\begin{equation} \label{strong_ie}
\left|\tau_{TM}-p/q\right|\le \frac{c}{q(\log\log q)^2}.
\end{equation}
has infinitely many solutions $(p,q)\in\N^2$.
\end{theorem}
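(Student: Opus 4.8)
The plan is to push everything through to $\ftmm(2)$ using the identity $\cTM=\frac12\bigl(1-\ftmm(2)\bigr)$ from~\eqref{TM_link}, run Lemmas~\ref{lemma_rational_approximation_n} and~\ref{crucial_simplification} at $z_0=2$, and then exploit the common factor $3^m$ to \emph{reduce} the approximations. Fix $m\ge 3$ and let $n_m\le 3^{m-1}$ be the index furnished by Lemma~\ref{crucial_simplification}, so that $3^m$ divides both $\tP_{n_m}(2)$ and $\tQ_{n_m}(2)$. I set
$$
q_m:=\frac{\tQ_{n_m}(2)}{3^m},\qquad p_m:=\frac{\tP_{n_m}(2)}{3^m},
$$
which are integers with $p_m/q_m=\tP_{n_m}(2)/\tQ_{n_m}(2)$. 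The value of the fraction, and hence the approximation error, is untouched by this reduction, but the denominator drops by a factor $3^m$; this is the entire source of the improvement.

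First I would establish that $\ftmm(2)$, and hence $\cTM$, is not badly approximable. Since $36\cdot 2^{n_m}=2\cdot 18\cdot 2^{n_m}$ and $\tQ_{n_m}(2)\le 2\cdot 2^{18\cdot 2^{n_m}}$, Lemma~\ref{lemma_rational_approximation_n} gives $\tQ_{n_m}(2)^2\,|\ftmm(2)-\tP_{n_m}(2)/\tQ_{n_m}(2)|\le 4C$. Dividing numerator and denominator by $3^m$ turns this into
$$
q_m^{\,2}\left|\ftmm(2)-\frac{p_m}{q_m}\right|\le\frac{4C}{3^{2m}},
$$
and letting $m\to\infty$ shows $q_m^{\,2}|\ftmm(2)-p_m/q_m|\to 0$, i.e.\ $\ftmm(2)$ is not badly approximable; the same conclusion transfers to $\cTM$ via~\eqref{TM_link}. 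That this is an infinite family with $q_m\to\infty$ follows because the $p_m/q_m$ are the honest convergents $\hat{P}_{9\cdot 2^{n_m+1}}/\hat{Q}_{9\cdot 2^{n_m+1}}$ to the irrational $\ftmm(2)$, and the construction in Lemma~\ref{crucial_simplification} yields $n_m>m$, so $n_m\to\infty$ and infinitely many distinct convergents occur.

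For the quantitative claim~\eqref{strong_ie}, the remaining task is to convert the gain $3^{-2m}$ into a $(\log\log q_m)^{-2}$ bound, and here the \emph{second} conclusion $n_m\le 3^{m-1}$ of Lemma~\ref{crucial_simplification} is indispensable. From $q_m\le 2\cdot 2^{18\cdot 2^{n_m}}$ I get $\log\log q_m\ll n_m$, and then $n_m\le 3^{m-1}\le 3^m$ gives $(\log\log q_m)^2\ll 3^{2m}$, so that $\tfrac{4C}{3^{2m}}\ll(\log\log q_m)^{-2}$. Combined with the display above this yields $|\ftmm(2)-p_m/q_m|\le c_2\,q_m^{-2}(\log\log q_m)^{-2}\le c_2\,q_m^{-1}(\log\log q_m)^{-2}$ (using $q_m\ge 1$). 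Passing to $\cTM$ with $q=2q_m$ and $p=q_m-p_m$, so that $|\cTM-p/q|=\tfrac12|\ftmm(2)-p_m/q_m|$, and absorbing the harmless replacement of $\log\log q_m$ by $\log\log q$ into the constant, gives~\eqref{strong_ie}.

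The genuinely hard work has already been carried out in Lemmas~\ref{lemma_rational_approximation_n} and~\ref{crucial_simplification}; what remains is essentially bookkeeping. The one step demanding care, and the crux of the strengthened rate, is precisely the conversion of the arithmetic gain $3^{2m}$ into a rate in $q$: it rests entirely on the explicit bound $n_m\le 3^{m-1}$, which caps how large $\tQ_{n_m}(2)$, and thus $\log\log q_m$, can be relative to $m$. Without that bound one would still obtain ``not badly approximable'' but no control whatsoever on the rate of approximation.
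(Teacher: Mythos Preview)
Your proof is correct and follows essentially the same route as the paper: reduce to $\ftmm(2)$ via~\eqref{TM_link}, combine Lemma~\ref{lemma_rational_approximation_n} with the common factor $3^m$ from Lemma~\ref{crucial_simplification} to get $q_m^2\,|\ftmm(2)-p_m/q_m|\le 4C/3^{2m}$, and then use $n_m\le 3^{m-1}$ to convert $3^{-2m}$ into $(\log\log q_m)^{-2}$. The only imprecision is the phrase ``honest convergents \ldots\ to the irrational $\ftmm(2)$'': these are values of \emph{functional} convergents at $z=2$, not necessarily continued-fraction convergents of the real number $\ftmm(2)$, but your actual argument for $q_m\to\infty$ (namely $n_m>m\to\infty$) does not rely on that and is fine.
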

\begin{proof}
We are going to prove that an analogue of~\eqref{strong_ie} is
satisfied for $\ftmm(2)$. Then the relation~\eqref{TM_link} would
straightforwardly imply the same condition on $\cTM$ too.

By Lemma~\ref{lemma_rational_approximation_n} we have that the sequence $p_n:=\tilde{P}_{n}(2)$ and $q_n:=\tilde{Q}_{n}(2)$ provide sufficiently good rational approximations to $\ftmm(2)$, that is for any $n$ we have
$$
\left|\ftmm(2)-\frac{p_n}{q_n}\right|\leq\frac{C}{q^2_n},
$$
where the constant $C$ is independent of $n$.
Moreover, by Lemma~\ref{crucial_simplification} we have a
subsequence of indices $(n_m)_{m\in\N}$ such that both integers
$\tilde{P}_{n_m}(2)$ and $\tilde{Q}_{n_m}(2)$ are divisible by
$3^m$. Therefore the integers
$\tilde{p}_{n_m}:=\frac{\tilde{P}_{n_m}(2)}{3^m}$ and
$\tilde{q}_{n_m}:=\frac{\tilde{Q}_{n_m}(2)}{3^m}$ satisfy
\begin{equation} \label{main_theo_proof_ie}
\left|\ftmm(2)-\frac{\tilde{p}_{n_m}}{\tilde{q}_{n_m}}\right|\leq\frac{C}{3^{2m}\tilde{q}_{n_m}^2},
\end{equation}
which readily implies that the number $\ftmm(2)$ is not badly approximable.

To justify~\eqref{strong_ie}, note that because of the bound $n_m\le
3^{m-1}$ and the explicit formula
$$
\tilde{Q}_{n_m}(2):=Q_9(2^{2^{n_m+1}}),
$$
there exists a constant $c_1$ independent of $m$ such that
$\tilde{q}_{n_m}\leq c_1 2^{9\cdot 2^{3^m}}$, so for $m$
sufficiently large we have $\log\log\tilde{q}_{n_m}\leq 2\cdot 3^m$
and~\eqref{strong_ie} follows from~\eqref{main_theo_proof_ie}.
\end{proof}

\section{Constants $\ftmm(a)$ for arbitrary $a\in\NN$}

The proposed chain of lemmata suggests the method for checking
whether the value $\ftmm(a)$ is badly approximable for an arbitrary $a\in\NN$,
$a>1$. We formulate it as the following theorem.

\begin{theorem}\label{th_tma}
Assume that there exist positive integers $n,t,p$ such that
\begin{enumerate}
\item $p$ is a prime such that $p\,||\, a^{2^n}-1$ (recall Notation~\ref{a_db_b});
\item $2$ is a primitive root modulo $p^2$;
\item $p\,||\, \hat{Q}_t(1)$;
\item $\hat{Q}'_t(1)\not\equiv 0\pmod p$.
\end{enumerate}
Then $\ftmm(a)$ is not badly approximable. Moreover,
there exists a constant $c>0$ such that the inequality
\begin{equation} \label{strong_ie_general}
\left|\ftmm(a)-p/q\right|\le \frac{c}{q(\log\log q)^2}.
\end{equation}
has infinitely many solutions $(p,q)\in\N^2$.
\end{theorem}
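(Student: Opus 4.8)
The plan is to mirror the proof of Theorem~\ref{theo_main_one}, replacing the specific data $(a=2,\,p=3,\,t=9)$ by the abstract data $(a,\,p,\,t)$ guaranteed by hypotheses (1)--(4). First I would fix the analogue of the subsequence of convergents from Definition~\ref{def_tP_tQ}. Using the convergent $\hat{P}_t(z)/\hat{Q}_t(z)$ in place of $\hat{P}_9/\hat{Q}_9$, set
\begin{equation*}
\tP_n(z):=\prod_{k=0}^{n}(z^{2^k}-1)\,\hat{P}_t(z^{2^{n+1}}),\qquad
\tQ_n(z):=\hat{Q}_t(z^{2^{n+1}}).
\end{equation*}
By the same iterated application of Lemma~\ref{lem_pqz2}, these are genuine convergents to $\ftmm(z)$, and the argument of Lemma~\ref{lemma_rational_approximation_n} goes through verbatim (with the exponent $19$ replaced by $\deg\hat{Q}_t+1$ and the leading coefficient $6$ replaced by the actual first nonzero Laurent coefficient of $\ftmm-\hat{P}_t/\hat{Q}_t$) to give, for the integer point $z_0=a$, the twin estimates $\bigl|\ftmm(a)-\tP_n(a)/\tQ_n(a)\bigr|\le C\,a^{-c_1 2^n}$ and $\tQ_n(a)\le 2\,a^{c_2 2^n}$ with $c_1=2c_2$, so that $\tP_n(a)/\tQ_n(a)$ satisfies \eqref{intro_sufficiently_good_approximation}.

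The heart of the matter is reproducing Lemma~\ref{crucial_simplification}: I must show that for every large $m$ there is an index $n_m\le$ (something like) $(p-1)\,p^{m-1}$ with $p^m\mid\tP_{n_m}(a)$ and $p^m\mid\tQ_{n_m}(a)$. For the denominator, hypotheses (3)--(4) say $p\,||\,\hat{Q}_t(1)$ and $\hat{Q}_t'(1)\not\equiv0\pmod p$, which is exactly the input needed for Hensel's lemma: it produces, for each $m$, a root $x_m$ of $\hat{Q}_t(x)\equiv0\pmod{p^m}$ lifting $x\equiv1\pmod p$ with $x_m\not\equiv1\pmod{p^2}$. Hypothesis (2) together with Lemma~\ref{lemma_primitive_root} makes $2$ a primitive root modulo $p^m$ for all $m$, so the analogues of Lemma~\ref{lemma_first_congruence} and Lemma~\ref{lemma_main_congruence} (with $3$ systematically replaced by $p$, using $\phi(p^m)=(p-1)p^{m-1}$) yield an exponent $t_m$ with $2^{2^{t_m}}\equiv x_m\pmod{p^m}$, whence $p^m\mid\hat{Q}_t(a^{2^{2^{t_m}}})$; here hypothesis (1), $p\,||\,a^{2^n}-1$, is what guarantees that $a^{2^k}\equiv1\pmod p$ for all $k\ge n$, so that the multiplicative order of $a$ modulo $p$ divides $2^n$ and the congruence machinery applies to $a$ exactly as it applied to $2$ in the base case. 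For the numerator, the factor $\prod_{k}(a^{2^k}-1)$ supplies many factors of $p$: since $a^{2^k}\equiv1\pmod p$ for $k\ge n$, that product is divisible by $p^{\,n_m-n}$, and after arranging $n_m>m+n$ (by adding a multiple of the period $(p-1)p^{m-2}$ to $t_m$, as in the original) we get $p^m\mid\tP_{n_m}(a)$.

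The final step is to assemble the conclusion as in the proof of Theorem~\ref{theo_main_one}. After dividing $\tP_{n_m}(a)$ and $\tQ_{n_m}(a)$ by their common factor $p^m$, the resulting rationals $\tilde p_{n_m}/\tilde q_{n_m}$ satisfy $\bigl|\ftmm(a)-\tilde p_{n_m}/\tilde q_{n_m}\bigr|\le C\,p^{-2m}\,\tilde q_{n_m}^{-2}$, so the badly-approximable inequality \eqref{intro_pb_diophantine} fails and $\ftmm(a)$ is not badly approximable. The quantitative bound \eqref{strong_ie_general} then follows from the same book-keeping: the bound $n_m\le c_3\,p^{m}$ together with the explicit $\tilde q_{n_m}\asymp a^{c_2 2^{2^{n_m+1}}}$ forces $\log\log\tilde q_{n_m}\le c_4\,p^m$, turning the gain $p^{-2m}$ into a gain of order $(\log\log\tilde q_{n_m})^{-2}$. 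The main obstacle I anticipate is not any single step but the bookkeeping of hypothesis (1): one must check that $p\,||\,a^{2^n}-1$ (rather than merely $p\mid a^{2^n}-1$) is the precise condition that makes both the Hensel lifting for $\hat{Q}_t$ and the counting of $p$-adic valuation in $\prod_k(a^{2^k}-1)$ behave as in the case $a=2,\ p=3$; in particular one should verify that the order of $a$ modulo $p^2$ is not already trivial, so that the primitive-root argument via hypothesis (2) genuinely reaches every power $p^m$.
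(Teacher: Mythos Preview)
Your overall strategy matches the paper's, but the key congruence step contains a genuine error, and there is a secondary omission.

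\textbf{The error.} You claim that the analogues of Lemmas~\ref{lemma_first_congruence} and~\ref{lemma_main_congruence} (with $3$ replaced by $p$) give $t_m$ with $2^{2^{t_m}}\equiv x_m\pmod{p^m}$, ``whence $p^m\mid\hat{Q}_t(a^{2^{2^{t_m}}})$''. For $a\neq 2$ this inference is a non sequitur: what you need is $a^{2^{N}}\equiv x_m\pmod{p^m}$ for some $N$, so that $\hat{Q}_t(a^{2^{N}})\equiv\hat{Q}_t(x_m)\equiv 0$. Worse, the direct analogue of Lemma~\ref{lemma_main_congruence} fails for general $p$: if $2$ is a primitive root modulo $p^m$ and $p\,||\,x_m-1$, the exponent $k$ with $2^k\equiv x_m$ satisfies $(p-1)\mid k$, and one then cannot solve $2^n\equiv k\pmod{(p-1)p^{m-1}}$ unless $p-1$ is itself a power of $2$. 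In the base case $p=3$ this obstruction vanishes because $p-1=2$; for general $p$ it does not. The coincidence that made Theorem~\ref{theo_main_one} work is that the integer $a=2$ and the doubling factor $2$ in the exponent $2^{n}$ happened to agree.

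The paper's argument is genuinely different here and sidesteps the obstruction. It never tries to reach $x_m$ as a power of $2$. Instead, hypothesis~(1) is used to show that $a^{2^n}$ has multiplicative order exactly $p^{m-1}$ in $(\Z/p^m\Z)^*$, hence generates the full kernel of the reduction $(\Z/p^m\Z)^*\to(\Z/p\Z)^*$. Hypothesis~(2) is then applied only to the \emph{exponent}: since $2$ is a primitive root modulo $p^{m-1}$, the set $\{2^s\bmod p^{m-1}:s\in\N\}$ equals $(\Z/p^{m-1}\Z)^*$, so $(a^{2^n})^{2^s}=a^{2^{n+s}}$ ranges over all generators of that kernel and in particular hits $x_m$. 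This is precisely the point you flagged as ``the main obstacle'' in your final paragraph; it is where your sketch actually breaks, not merely where bookkeeping is required.

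\textbf{The omission.} The polynomials $\hat P_t,\hat Q_t$ have rational, not integer, coefficients in general, so $\tP_n(a)$ and $\tQ_n(a)$ need not be integers. The paper clears denominators by a fixed factor $d_Pd_Q$ (depending only on $t$) before running the divisibility argument; your proposal should do the same before speaking of $p^m\mid\tQ_{n_m}(a)$.
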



\begin{proof}
For any $n,t\in\NN$ we define
$$
\tP_{n,t}(z):=\prod_{k=0}^{n}(z^{2^k}-1)\hat{P}_t(z^{2^{n+1}})
$$
and
$$
\tQ_{n,t}(z):=\hat{Q}_t(z^{2^{n+1}}).
$$
Then the same arguments as in
Lemma~\ref{lemma_rational_approximation_n} imply that for every
$n\in\NN$ and $z_0\in\NN$ with $z_0\ge 2$ one has
\begin{eqnarray}
\left|\ftmm(z_0)-\frac{\tilde{P}_{n,t}(z_0)}{\tilde{Q}_{n,t}(z_0)}\right|&\leq& C\cdot z_0^{-4t \cdot 2^n}, \\
\tilde{Q}_{n,t}(z_0)&\leq&C\cdot z_0^{2t\cdot 2^{n}},
\end{eqnarray}
where the constant $C = C(z_0,t)$ is independent of $n$.

Values $\tilde{P}_{n,t}(a)$ and $\tilde{Q}_{n,t}(a)$ are not
necessarily integer. However $\tilde{P}_{n,t}(a)\in 1/d_P\cdot \ZZ$
and $\tilde{Q}_{n,t}(a) \in 1/d_Q\cdot \ZZ$ where $d_P$
(respectively $d_Q$) is the least common multiple of all
denominators of the rational coefficients of the polynomial
$\hat{P}_t(z)$ (respectively of $\hat{Q}_t(z)$). So for every pair
of integers $p_n,q_n$ where $p_n = d_pd_Q\cdot\tilde{P}_{n,t}(a)$ ,
$q_n = d_pd_Q\cdot\tilde{Q}_{n,t}(a)$ the following inequality takes
place:
$$
\left|\ftmm(a) - \frac{p_n}{q_n}\right|\le \frac{d_Q^2\cdot
C^3}{q_n^2}.
$$
Note that the value $d_Q^2C^3$ depends only on $a$ and $t$ and does
not depend on $n$. Therefore it is enough to find arbitrarily large
$r\in\ZZ$ and some $n\in\NN$ such that values $p_n$ and $q_n$ have a
common factor $r$. We will show that positive integer power of $p$ can play the role of such common factor $r$.

By Condition 1 of the theorem, $p\mid a^{2^n}-1$. Therefore for
every $m>n$ we have $p^{m-n}\mid \tilde{P}_{m,t}(a)$.

Conditions 3 and 4 and Hensel's lemma imply that the equation
$\hat{Q}_t(x) = 0$ has a solution $x\in\Z_p$ such that $x\equiv
1\pmod p$ and $x\not\equiv 1\pmod{p^2}$. Next, since 2 is a
primitive root modulo $p^2$ (in view of condition 2), then by
Lemma~\ref{lemma_primitive_root} it is also a primitive root modulo
every power~$p^m$, $m\in\N$.

For every $m\in\N$, the multiplicative group $\cR^*_{p^m}$ of
residues modulo $p^m$ has the order $\phi(p^m)=(p-1)p^{m-1}$. As the
element $a^{2^n}$ is congruent to 1 modulo $p$, it lies in the
kernel of the canonical projection
$\cR^*_{p^m}\rightarrow\cR^*_{p}$. The multiplicative group
$\cR^*_{p}$ of residues modulo $p$ has the order $p-1$, so the
residue $a^{2^n}$ has the order $p^{l}$ in $\cR^*_{p^m}$, for some
$l\leq m-1$. If the value $l$ is strictly smaller than $m-1$, then
we necessarily have $a^{2^n}\equiv 1\mod p^2$, which contradicts the
Condition 1, hence the multiplicative order of $a^{2^n}$ modulo
$p^{m}$ is exactly $p^{m-1}$ and thus the set of residues
$\{a^{2^n\cdot s}\mod p^m\!: s\in \NN, \gcd(s,p)=1\}$ coincides with
the set of residues modulo $p^m$ congruent to 1 modulo $p$ but not
congruent to 1 modulo $p^2$. So, there is an $s\in\N$ such that
$a^{2^n\cdot s}\equiv x \mod p^{m}$ and $s\not\equiv 0\mod p$.

As $2$ is a primitive root modulo $p^{m-1}$ and $a^{2^n}$ has order
$p^{m-1}$ modulo $p^m$, we have that the set of residues
$\{a^{2^n\cdot 2^s}\mod p^m\!\!: s\in \NN\}$ coincides with the set
of residues $\{a^{2^n\cdot s}\mod p^m\!: s\in \NN, \gcd(s,p)=1\}$.
In particular, there exists $s_1$ such that $a^{2^{n+s_1}}\equiv
x\mod p^m$.


Moreover, as $s_1$ is defined modulo $\phi(p^{m-1})=(p-1)p^{m-2}$, one can choose such $s_1$ that $n_m:=n+s_1$ verifies
\begin{equation} \label{general_theo_size_ie}
m<n_m\leq m+(p-1)p^{m-2}.
\end{equation}
So we get that
$\tilde{Q}_{n_m,t}(a)$ is divisible by $p^m$ and therefore both
$p_{n_m}$ and $q_{n_m}$ have common divisor $p^{m-n}$. 
be taken arbitrary large this finishes the proof of the theorem.
We deduce that there exists a constant $C_1$ such that for any $m>n$
$$
\left|\ftmm(a) - \frac{\tilde{p}_m}{\tilde{q}_m}\right|\le \frac{C_1}{p^{2n_m}\tilde{q}_m^2},
$$
where $\tilde{p}_m=p_{n_m}/p^m$ and $\tilde{q}_m=q_{n_m}/p^m$ are integers. The upper inequality in~\eqref{general_theo_size_ie} implies $\log\log q_{n_m}\ll n_m\ll p^m$, so moreover we have $\log\log \tilde{q}_m\ll p^m$ and the inequality~\eqref{strong_ie_general} follows.
\end{proof}

{\bf Remark.} Conditions 2 -- 4 of Theorem~\ref{th_tma} do not
depend on $a$ at all. One can look at them as conditions on a prime
number $p$. We call $p$ {\it acceptable} if there exists $t\in\NN$
such that the Conditions 2 -- 4 are satisfied. Then
Theorem~\ref{th_tma} can be reformulated as follows: if there exist
$n\in\NN_0$ and an acceptable prime $p$ such that $p\,||\,
a^{2^n}-1$ then $\ftmm(a)$ is not badly approximable. By testing
various polynomials $\hat{Q}_t(z)$ it is easy to find many
acceptable primes. In the previous section we have already checked
that $3$ is acceptable. By considering
$$
\hat{Q}_{11}(z)=x^{11}+x^{10}+\frac{2 x^9}{3}+\frac{2 x^8}{3}+\frac{4 x^7}{3}+\frac{4 x^6}{3}+x^5+x^4+\frac{2 x^3}{3}+\frac{2 x^2}{3}+\frac{x}{3}+\frac{1}{3},
$$
one can check that $5$ is acceptable too. This remark already leads
us to the following corollary, which generalizes
Theorem~\ref{theo_main_one}.

\begin{corollary} \label{cor_tma}
Let $a\in\N$ be a positive integer which is not divisible by 15. Then $\ftmm(a)$ is not badly approximable.
\end{corollary}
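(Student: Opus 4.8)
The plan is to derive Corollary~\ref{cor_tma} as a direct consequence of Theorem~\ref{th_tma} together with the observation, recorded in the Remark, that both $3$ and $5$ are acceptable primes. The key point is that Theorem~\ref{th_tma} (in its reformulated version) guarantees that $\ftmm(a)$ is not badly approximable as soon as there exist an integer $n\in\N_0$ and an acceptable prime $p$ with $p\,\|\,a^{2^n}-1$. So the entire task reduces to showing: \emph{for every positive integer $a$ not divisible by $15$, at least one of the primes $3$ or $5$ satisfies $p\,\|\,a^{2^n}-1$ for some $n\in\N_0$.}

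First I would dispose of trivialities and reduce to a congruence computation. The hypothesis that $15\nmid a$ means that $a$ is not divisible by both $3$ and $5$; equivalently, $3\nmid a$ or $5\nmid a$. I will treat the two primes separately and show that whichever of $3,5$ does not divide $a$ can be used. The heart of the matter is the following elementary lifting-the-exponent type statement: if $p\in\{3,5\}$ and $p\nmid a$, then there exists $n\in\N_0$ with $p\,\|\,a^{2^n}-1$. To prove this I would first find the smallest $n_0$ such that $p\mid a^{2^{n_0}}-1$ (such $n_0$ exists because the multiplicative order of $a$ modulo $p$ divides $p-1$, which for $p\in\{3,5\}$ is a power of $2$, namely $2$ or $4$, so the order is itself a power of $2$ and divides $2^{n_0}$ for $n_0$ large enough). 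Writing $b:=a^{2^{n_0}}$, so that $b\equiv 1\pmod p$, I then track the exact power of $p$ dividing $b^{2^k}-1$ as $k$ grows.

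The main computational step, and the step I expect to require the most care, is the exponent bookkeeping. Suppose $p^{e}\,\|\,b-1$ with $e\geq 1$. If $e=1$ we are already done with $n=n_0$. If $e\geq 2$, I would use the factorization $b^{2}-1=(b-1)(b+1)$ and note that, since $b\equiv 1\pmod p$ with $p$ odd, one has $b+1\equiv 2\pmod p$, so $p\nmid b+1$; hence squaring the base does \emph{not} change the $p$-adic valuation once we are past the first step, giving $v_p(b^{2^k}-1)=v_p(b-1)=e$ for all $k\geq 0$. This means the valuation is ``stuck'' at $e\geq 2$ and never drops to $1$ by further squaring, which would be an obstruction. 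The resolution is that I have freedom in choosing $n_0$: I should choose it \emph{minimal}, so that $p\mid a^{2^{n_0}}-1$ but $p\nmid a^{2^{n_0-1}}-1$ (when $n_0\geq 1$). Then writing $c:=a^{2^{n_0-1}}$ we have $c^2-1=(c-1)(c+1)$ with $p\nmid c-1$, forcing $p\,\|\,c+1$ and hence $p\,\|\,c^2-1=a^{2^{n_0}}-1$; so the minimal choice automatically yields valuation exactly $1$. The only remaining case is $n_0=0$, i.e.\ $a\equiv 1\pmod p$, where I argue directly on $v_p(a-1)$: if this equals $1$ take $n=0$, and otherwise the same squaring identity $a^{2}-1=(a-1)(a+1)$ shows the valuation is preserved, so one instead backs off is impossible and we must verify that $a\equiv1\pmod{p^2}$ cannot persist for all $a$ not divisible by $p$ --- but in fact for this case I would simply note $p\,\|\,a^{2^n}-1$ may fail for every $n$ precisely when $a\equiv1\pmod{p^2}$, and here the point is that we still have the \emph{other} prime available, since $a\equiv1\pmod{p^2}$ forces $p\mid a$ to be false and we may switch to the alternative prime in $\{3,5\}$.

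Finally I would assemble the cases. Given $a$ with $15\nmid a$, at least one of $3,5$ fails to divide $a$; by the valuation analysis above, that prime $p$ yields some $n$ with $p\,\|\,a^{2^n}-1$ \emph{unless} $a\equiv 1\pmod{p^2}$, in which case I invoke the second acceptable prime and check the complementary congruence there. The genuinely delicate bookkeeping is confined to confirming that the two small primes $3$ and $5$ cannot both simultaneously land in the bad regime $a\equiv 1\pmod{p^2}$ for an $a$ not divisible by $15$; I expect this to follow from a short finite check of residues, after which an appeal to Theorem~\ref{th_tma} with the acceptable prime completes the proof.
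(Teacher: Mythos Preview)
Your overall strategy coincides with the paper's: invoke Theorem~\ref{th_tma} via the acceptable primes $3$ and $5$, and argue that for every $a$ with $15\nmid a$ at least one of these primes satisfies condition~1. The paper leaves that last verification as an ``easy exercise''; you attempt to supply it, and this is precisely where a genuine gap appears.

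The assertion ``$p\nmid c-1$, forcing $p\,\|\,c+1$'' is incorrect. From $p\nmid c-1$ and $p\mid(c-1)(c+1)$ you obtain only $p\mid c+1$, not $p\,\|\,c+1$; nothing prevents $p^2\mid c+1$. For example, with $p=3$ and $a=8$ the minimal $n_0$ is $1$, $c=a^{2^{n_0-1}}=8$, and $c+1=9=3^2$, so $v_3(8^{2}-1)=v_3(63)=2$, not $1$. Consequently the ``bad regime'' for an odd prime $p\nmid a$ is not merely $a\equiv 1\pmod{p^2}$: it is the set of residues for which $a^{2^{n_0}}\equiv 1\pmod{p^2}$ at the minimal $n_0$, namely $a\equiv\pm 1\pmod 9$ for $p=3$ and $a\equiv 1,7,18,24\pmod{25}$ for $p=5$.

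With the corrected bad sets (and remembering that $p\mid a$ also renders $p$ unusable), the promised ``short finite check of residues'' fails outright. Take $a=24$: here $3\mid a$, while $24\equiv -1\pmod{25}$ gives $v_5(24^{2^n}-1)=2$ for every $n\ge 1$ and $v_5(24-1)=0$; so neither $3$ nor $5$ ever satisfies $p\,\|\,a^{2^n}-1$. Likewise $a=26$ satisfies $26\equiv -1\pmod 9$ and $26\equiv 1\pmod{25}$, giving $v_3(26^{2^n}-1)=3$ for $n\ge 1$ and $v_5(26^{2^n}-1)=2$ for all $n\ge 0$. Both examples have $15\nmid a$, yet condition~1 of Theorem~\ref{th_tma} is met by neither $3$ nor $5$, so the argument as written does not close. (The paper's own proof, which defers this very step to the reader, is subject to the same objection.)
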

\begin{proof}
As we have noted just before this corollary, primes 3 and 5 are
acceptable, that is they verify conditions 2--4 of
Theorem~\ref{th_tma}.

It is an easy exercise, which we leave to the reader, to check that for $a\not\equiv 0\pmod{15}$ either 3 or 5 satisfies the
condition 1 of Theorem~\ref{th_tma} and therefore by this theorem we obtain that $\ftmm(a)$ is not badly approximable.
\end{proof}

So the remaining uncovered case is $\ftmm(a)$ where $a$ is divisible
by 15.

Unfortunately, Theorem~\ref{th_tma} can not be applied to show that
$\ftmm(15)$ is not badly approximable. Indeed, the numbers $15-1,
15^2-1$ and $15^4-1$ have prime divisors $2,7$ and $113$, and 2 is
not a primitive root for neither of them. Other prime divisors of
$15^{2^n}-1$ for some $n\in\NN$ must also divide $15^{2^m}+1$ for
some $m\ge 2$. It is a classical result that such primes $p$ satisfy
the condition $p\equiv 1\pmod 8$. Since $2$ is a quadratic residue
modulo such primes $p$ then the condition 2 is never satisfied.

However Theorem~\ref{th_tma} can still work for $a$ equal to some of
the multiples of $15$. For example, for $a=30$ we have $29 \mid
30-1$ and 29 is an acceptable prime (one can take $t = 35$). Also,
$$
11\mid 45 -1;\quad 61\mid 60^2 - 1;\quad 19\mid 75^2 - 1;\quad
13\mid 90^2-1
$$
and primes $11, 61, 19, 13$ are acceptable. One may check this by
taking $t = 43, 49, 19$ and $33$ respectively.
So, all the values $a\in\N$ such that $2\leq a\leq 104$ and $a\ne
15$ assure that $\ftmm(a)$ is not badly approximable.

\section*{Acknowledgements}

The authors are grateful to Yann Bugeaud for valuable advices and discussions, which contributed a lot to the development of the paper.
The first author acknowledges the support of the EPSRC grant number EP/L005204/1.

\end{document}